\numberwithin{equation}{section}
\patchcmd{\thesubsection}{\arabic}{\arabic}{}{}
\patchcmd{\@seccntformat}{\@secnumfont}{%
  \@secnumfont\expandafter\protect\csname format#1\endcsname}{}{}
\patchcmd{\@startsection}{\@afterindenttrue}{\@afterindentfalse}{}{}
\patchcmd{\subsection}{-.5em}{.3\linespacing}{}{}
\theoremstyle{plain}
\newtheorem{theorem}{Theorem}[section]
\newtheorem{proposition}[theorem]{Proposition}
\newtheorem{lemma}[theorem]{Lemma}
\newtheorem{corollary}[theorem]{Corollary}
\theoremstyle{remark}
\newtheorem{remark}[theorem]{Remark}
\newcommand{\Hom}[3][]{\ensuremath{\mathrm{Hom}_{#1} (#2, #3)}}
\newcommand{\Ker}[1]{\ensuremath{\mathrm{Ker} (#1)}}
\newcommand{\SKer}[1]{\ensuremath{\mathcal{K}er (#1)}}
\newcommand{\Pic}[1]{\ensuremath{\mathrm{Pic} (#1)}}
\newcommand{\At}[1]{\ensuremath{\mathrm{At} (#1)}}
\newcommand{\ad}[1]{\ensuremath{\mathrm{ad}  (#1)}}
\newcommand{\ENd}[1]{\ensuremath{\mathrm{End}  (#1)}}
\newcommand{\Img}[1]{\ensuremath{\mathrm{Im} (#1)}}
\newcommand{\cat}[1]{\ensuremath{\mathcal{#1}}}
\newcommand{\END}[2][]{\ensuremath{\mathcal{E}\mathit{nd}_{#1} (#2)}}
\newcommand{\id}[1]{\ensuremath{\mathbf{1}_{#1}}}
\renewcommand{\dim}[2][]{\ensuremath{\mathrm{dim}_{#1}(#2)}}
\newcommand{\N}{\ensuremath{\mathbb{N}}}
\newcommand{\Z}{\ensuremath{\mathbb{Z}}}
\newcommand{\Q}{\ensuremath{\mathbb{Q}}}
\newcommand{\p}{\ensuremath{\mathbf{P}}}
\newcommand{\C}{\ensuremath{\mathbb{C}}}
\newcommand{\tr}[1]{\ensuremath{\mathrm{tr}(#1)}}
\newcommand{\struct}[1]{\ensuremath{\mathcal{O}_{#1}}}
\newcommand{\DifF}[3][]{%
  \ensuremath{\mathcal{D}\mathit{iff}^{#1}(#2, #3)}}
\newcommand{\coh}[3]{\ensuremath{\mathrm{H}^{#1}(#2,#3)}}
\renewcommand{\bar}[1]{\ensuremath{\overline{#1}}}
\begin{document}

\title[Moduli space of logarithmic connections]{Moduli space of logarithmic connections  singular over a finite subset of a compact Riemann surface}
\author{Anoop Singh}
\address{Harish-Chandra Research Institute, HBNI \\ Chhatnag Road \\ Jhusi \\
  Prayagraj 211~019 \\ India}
  \email{anoopsingh@hri.res.in}

\subjclass[2010]{14D20, 14C22, 14E05}
  \keywords{Logarithmic connection, Moduli space, Picard group}

\begin{abstract}
Let $S$ be a finite subset of a compact connected Riemann surface $X$ of genus $g \geq 2$. Let $\cat{M}_{lc}(n,d)$ denote the moduli
space of pairs $(E,D)$, where $E$ is a holomorphic vector 
bundle over $X$ and $D$ is a logarithmic connection on $E$ singular over $S$, with fixed residues in the centre of
$\mathfrak{gl}(n,\C)$, where $n$ and $d$ are mutually corpime. Let $L$ denote a fixed line bundle with a logarithmic connection $D_L$ singular over $S$. Let 
$\cat{M}'_{lc}(n,d)$ and $\cat{M}_{lc}(n,L)$ be the moduli spaces parametrising all pairs $(E,D)$ such that
underlying vector bundle $E$ is stable and $(\bigwedge^nE, \tilde{D}) \cong (L,D_L)$ respectively.
  Let $\cat{M}'_{lc}(n,L) \subset \cat{M}_{lc}(n,L)$ be the Zariski open dense subset such that the underlying vector bundle is stable. We show that there is a natural compactification of $\cat{M}'_{lc}(n,d)$ and $\cat{M}'_{lc}(n,L)$ and  compute their Picard groups.
 We also show that $\cat{M}'_{lc}(n,L)$ and hence $\cat{M}_{lc}(n,L)$ do not have any non-constant algebraic functions but they admit non-constant holomorphic functions. We also study the Picard group
 and algebraic functions on the moduli space of 
 logarithmic connections singular over $S$, with arbitrary residues.

\end{abstract}

\maketitle

\section{Introduction}
\label{Intro}
Let $X$ be a compact Riemann surface of genus$(X) = g \geq 2$. Fix a 
finite subset $S = \{x_1, \ldots, x_m \} $ of $X$  such 
that $x_i \neq  x_j$ for all $i \neq j$. Let $E$ be a 
holomorphic vector 
 bundle over $X$ of rank $n \geq 1$ and degree $d$, where 
 $n$ and $d$ are mutually coprime.
 For each $j = 1, \ldots, m$, fix
 $\lambda_j \in \Q$ such that $n \lambda_j \in \Z$. We
 consider the pair $(E,D)$, where $D$ is a logarithmic 
 connection in $E$ singular over $S$ with residues 
 $Res(D,x_j) = \lambda_j \id{E(x_j)}$. For the 
 construction of the moduli space of logarithmic 
 connections, see \cite{N}, \cite{S1}. 
 
We have put the condition on the residue and on $\lambda_j \in \Q$,
that is, $n \lambda_j \in \Z$, to ensure the following 
 
 \begin{enumerate}
 \item The moduli space $\cat{M}_{lc}(n,d) \neq \emptyset$
 [see \cite{B}, Proposition $1.2$].
 \item Under the monodromy representation 
 $\rho: \pi_1(X_0,x_0) \to \text{SL}(n,\C)$, the image of the
path homotopy class of the loop $\gamma_j$ around $x_j \in S$
based at $x_0$, is a diagonal matrix with entries 
$\exp(-2 \pi \sqrt{-1} \lambda_j )$ [see section \eqref{Mod-log-conn}].
 \end{enumerate}

 In \cite{BR}, the moduli space 
 of rank $n$ logarithmic connections singular exactly 
 over one 
 point has been considered and several properties, like  
 algebraic functions, compactification  and computation 
 of Picard group have been studied.  Also, the
 moduli space of rank one logarithmic connections singular over finitely many points with fixed residues has been considered in \cite{Se}, and it is proved that it has a natural symplectic structure and there are no non-constant algebraic functions on it.

 In the present article, 
 our aim is to study the algebraic functions and Picard 
 group for the moduli space of rank $n$ logarithmic connections 
 singular over $S$ with fixed residues and hence, we end 
 up  generalising several results in \cite{BR}.

 Let $\cat{U}(n,d)$ denote the moduli space of all stable 
 vector bundles of rank $n$ and degree $d$ over $X$ and 
    $\cat{M}_{lc}(n,d)$  denote of the moduli space logarithmic connection 
 $(E,D)$ 
 singular over $S$ with fixed residues $Res(D,x_j) = 
 \lambda_j \id{E(x_j)}$ for all $j= 1, \ldots, m$, and 
 $\cat{M}'_{lc}(n,d) \subset \cat{M}_{lc}(n,d)$ be the 
 moduli space of logarithmic connection whose underlying 
 vector bundle is stable. We show that there is a natural
 compactification of the moduli space 
 $\cat{M}'_{lc}(n,d)$. More precisely, we prove the following (see subsection \ref{proof_thm_1.1} for the proof.)
 \begin{theorem}
 \label{thm:1.1}
 There exists an algebraic vector bundle $\pi: \Xi \to \cat{U}(n,d)$ such that 
$\cat{M}'_{lc}(n,d)$ is embedded in $\p (\Xi)$ with $\p (\Xi) \setminus \cat{M}'_{lc}(n,d)$ as the
hyperplane at infinity.
 \end{theorem}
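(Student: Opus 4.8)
The plan is to realize $\cat{M}'_{lc}(n,d)$ as the space of logarithmic connections (with the prescribed central residues) on the universal bundle over the stable locus, and to observe that this is an affine-space torsor over $\cat{U}(n,d)$ whose underlying vector bundle is exactly the object $\Xi$ we seek. First I would recall the standard Atiyah-type description: on a fixed stable bundle $E$, the set of logarithmic connections $D$ on $E$ singular over $S$ with $Res(D,x_j)=\lambda_j \id{E(x_j)}$ is, whenever non-empty, a torsor under the vector space $\coh{0}{X}{\HOM{}{E}{E}\otimes \struct{X}\otimes \struct{X}(S)}$ of $\struct{X}(S)$-valued endomorphism-twisted $1$-forms; for $E$ stable this is $\coh{0}{X}{\mathrm{ad}(E)\otimes K_X(S)}\oplus \coh{0}{X}{K_X(S)}$, and the summand $\coh{0}{X}{K_X(S)}$ is absorbed by the fixed central residue condition. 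Non-emptiness on the stable locus is exactly the hypothesis $n\lambda_j\in\Z$ via \cite{B}, Proposition $1.2$, quoted in the introduction.

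Next I would globalize this fibrewise picture. Because $\gcd(n,d)=1$, there is a universal (Poincar\'e) bundle $\mathcal{E}$ on $X\times \cat{U}(n,d)$; let $p\colon X\times \cat{U}(n,d)\to \cat{U}(n,d)$ be the projection. Set
\[
\Xi \;=\; R^1 p_* \bigl(\HOM{}{\mathcal{E}}{\mathcal{E}}\otimes p_X^*\struct{X}\bigr)
\]
suitably corrected so that the fibre over $[E]$ is the relevant first hypercohomology group $\hcoh{1}{X}{\cat{C}^\bullet}$ of the two-term complex $\mathcal{D}iff^{\le 1}(E,E)\to \HOM{}{E}{E}\otimes K_X(S)$ computing logarithmic connections with the prescribed residues; by Grauert/cohomology-and-base-change and the constancy of the relevant Euler characteristics over the (smooth, connected) stable locus, this $R^1p_*$ is locally free, hence an algebraic vector bundle $\pi\colon \Xi\to \cat{U}(n,d)$. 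The affine bundle of actual connections is then the preimage of a canonical section/class in $\hcoh{2}$ of the Atiyah sequence, and it sits inside $\Xi$ as a torsor under the vector bundle $\mathrm{ad}$-twisted part; concretely $\cat{M}'_{lc}(n,d)$ is a Zariski-locally-trivial affine-space bundle modeled on (a subbundle of) $\Xi$.

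To compactify, I would projectivize: inside $\p(\Xi\oplus\struct{\cat{U}(n,d)})$ — or, matching the statement literally, choosing the rank of $\Xi$ so that the affine bundle of connections is $\Xi$ minus a hyperplane sub-bundle — the affine bundle $\cat{M}'_{lc}(n,d)$ embeds as the complement of the projectivized hyperplane bundle at infinity. The standard fact is that an affine-space torsor under a vector bundle $V$ of rank $r$ over a base $Y$ is canonically an open subscheme of a $\p^{r}$-bundle over $Y$, namely $\p$ of the rank-$(r+1)$ "bundle of affine functions" extension $0\to \struct{Y}\to \Xi\to V\to 0$, with the complement the section $\p(V)\hookrightarrow\p(\Xi)$; taking this $\Xi$ proves the theorem, with $\p(\Xi)\setminus\cat{M}'_{lc}(n,d)=\p(V)$ the hyperplane at infinity. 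The main obstacle I expect is the global, base-change part of Step 2: one must check that the hypercohomology sheaves computing logarithmic connections behave well in families over the non-complete base $\cat{U}(n,d)$ (constant rank, local freeness of $R^1p_*$, vanishing of the obstruction $R^2$, and independence of the choice of Poincar\'e bundle up to twisting by a line bundle pulled back from $\cat{U}(n,d)$, which does not affect $\p(\Xi)$), together with verifying that the residue-normalization genuinely cuts the fibre down to the $\mathrm{ad}$-twisted piece uniformly in families. Once that bookkeeping is in place, the embedding and the identification of the boundary are formal.
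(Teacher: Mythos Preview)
Your final paragraph is exactly the paper's approach: $\Xi$ is the bundle of affine-linear functions on the fibres of $p\colon \cat{M}'_{lc}(n,d)\to\cat{U}(n,d)$, so $\Xi(E)=p^{-1}(E)^\vee$, and the embedding $\cat{M}'_{lc}(n,d)\hookrightarrow\p(\Xi)$ sends $(E,D)$ to the hyperplane $\Ker{\mathrm{ev}_{(E,D)}}\subset\Xi(E)$. The paper does this directly, bypassing the universal bundle and hypercohomology entirely: once one knows $p$ is a torsor under a vector bundle, $\Xi$ is simply \emph{defined} as the sheaf of fibrewise-affine algebraic functions on $\cat{M}'_{lc}(n,d)$ over $\cat{U}(n,d)$, and the identification of the complement as the hyperplane at infinity is then formal. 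Your middle paragraph is therefore an unnecessary detour, and as written it is muddled (the displayed $R^1p_*$ formula does not parse, and the base-change and obstruction worries you flag never arise).

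There is also a genuine error in your first paragraph. With residues \emph{fixed} at $\lambda_j\id{E(x_j)}$, the difference of two such connections has zero residue at each $x_j$, hence is a holomorphic (not logarithmic) $\ENd{E}$-valued $1$-form; the torsor is under $\coh{0}{X}{\Omega^1_X\otimes\ENd{E}}$ with no $\struct{X}(S)$-twist. Moreover, for $\cat{M}'_{lc}(n,d)$ there is no fixed-determinant condition, so the trace summand is \emph{not} absorbed: the fibre of $\Omega^1_{\cat{U}(n,d)}$ at $E$ is the full $\coh{0}{X}{\Omega^1_X\otimes\ENd{E}}$, not the $\ad{E}$-part. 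This misidentification does not actually break the compactification argument, which only needs that $p$ is a torsor under \emph{some} vector bundle, but it should be corrected.
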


 Let $\cat{U}(n,d)$ denote the moduli space of all stable 
 vector bundles over $X$. Then $\cat{U}(n,d)$ 
is an irreducible smooth complex projective variety of 
dimension $n^2(g-1)+1$ [see \cite{R}].
 
Now, we have the natural homomorphism 
\begin{equation}
\label{eq:0}
p: \cat{M}'_{lc}(n,d) \to \cat{U}(n,d)
\end{equation}
sending $(E,D)$ to $E$. 
The morphism $p$ induces a homomorphism 
\begin{equation}
\label{eq:1.1}
p^*: \Pic{\cat{U}(n,d)} \to  \Pic{\cat{M}'_{lc}(n,d)}
\end{equation}
of Picard groups,  
that sends an algebraic line bundle $\xi$ over $\cat{U}(n,d)$ to an algebraic line bundle $p^* \xi$ over $\cat{M}'_{lc}(n,d)$.
Here, $\Pic{Y}$ consists of algebraic line bundles over $Y$,
where $Y$ is an algebraic variety over $\C$.  we show the following (see subsection \ref{proof_thm_1.2} for the proof.)
 \begin{theorem}
 \label{thm:1.2}
 The homomorphism $p^*: \Pic{\cat{U}(n,d)} \to \Pic{\cat{M}'_{lc}(n,d)}$ is an isomorphism of groups.
 \end{theorem}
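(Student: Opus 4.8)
The plan is to exploit the compactification from Theorem \ref{thm:1.1}. Recall that $\cat{M}'_{lc}(n,d)$ sits inside the projective bundle $\p(\Xi)$ over $\cat{U}(n,d)$ as the complement of the hyperplane-at-infinity divisor, which I will call $H$. A standard argument then gives the result in two moves: first compute $\Pic{\p(\Xi)}$, and then remove $H$. Concretely, since $\cat{U}(n,d)$ is a smooth irreducible projective variety and $\Xi$ is an algebraic vector bundle over it, the projective bundle formula yields a split exact sequence
\begin{equation}
\label{eq:projbundle}
0 \longrightarrow \Pic{\cat{U}(n,d)} \stackrel{\pi^*}{\longrightarrow} \Pic{\p(\Xi)} \longrightarrow \Z \longrightarrow 0,
\end{equation}
where the copy of $\Z$ is generated by the relative hyperplane class $\struct{\p(\Xi)}(1)$. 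I will need to make sure that the normalization of $\Xi$ is chosen (or that the fibrewise linear structure of $\cat{M}'_{lc}(n,d) \to \cat{U}(n,d)$ is identified) so that the divisor $H$ is exactly a representative of $\struct{\p(\Xi)}(1)$ — this is precisely the content of the phrase ``hyperplane at infinity'' in Theorem \ref{thm:1.1}, and it is what makes the whole argument run.

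Next I would use the localization (excision) sequence for Picard groups of smooth varieties: for the open immersion $j: \cat{M}'_{lc}(n,d) \hookrightarrow \p(\Xi)$ with irreducible complement $H$ of codimension one, there is an exact sequence
\begin{equation}
\label{eq:excision}
\Z \cdot [H] \longrightarrow \Pic{\p(\Xi)} \stackrel{j^*}{\longrightarrow} \Pic{\cat{M}'_{lc}(n,d)} \longrightarrow 0,
\end{equation}
the first map sending $1$ to the class of $\struct{\p(\Xi)}(H) = \struct{\p(\Xi)}(1)$. Combining \eqref{eq:projbundle} and \eqref{eq:excision}: the image of $[H]$ in $\Pic{\p(\Xi)}$ is a generator of the $\Z$-summand complementary to $\pi^*\Pic{\cat{U}(n,d)}$, so quotienting by it leaves exactly $\pi^*\Pic{\cat{U}(n,d)}$, and the composite $\cat{M}'_{lc}(n,d) \xrightarrow{j} \p(\Xi) \xrightarrow{\pi} \cat{U}(n,d)$ is the map $p$ of \eqref{eq:0}. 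Hence $j^* \circ \pi^* = p^*$ is surjective. For injectivity, suppose $p^*\xi$ is trivial on $\cat{M}'_{lc}(n,d)$; then $\pi^*\xi$ restricted to the open set is trivial, so $\pi^*\xi \cong \struct{\p(\Xi)}(kH)$ for some $k \in \Z$ by \eqref{eq:excision}; restricting to a fibre of $\pi$ (a projective space) forces $k = 0$ since $\pi^*\xi$ is fibrewise trivial while $\struct{\p(\Xi)}(1)$ is not; then $\pi^*\xi$ is trivial on all of $\p(\Xi)$, and injectivity of $\pi^*$ in \eqref{eq:projbundle} gives $\xi$ trivial.

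The main obstacle is the bookkeeping in the paragraph following \eqref{eq:projbundle}: verifying that the divisor $H = \p(\Xi)\setminus\cat{M}'_{lc}(n,d)$ genuinely represents the $\struct{\p(\Xi)}(1)$ class (equivalently, that $\cat{M}'_{lc}(n,d) \to \cat{U}(n,d)$ is, in a way compatible with the embedding, a torsor under the vector bundle $\Xi^{\vee}$ or a suitable twist so that its fibrewise compactification adds one hyperplane). Everything else is formal: the projective bundle formula and the excision sequence are standard for smooth varieties over $\C$, and $\cat{U}(n,d)$ being smooth projective (hence $\Pic$ behaves well, and line bundles on $\p(\Xi)$ are fibrewise $\struct{}(k)$) is quoted from \cite{R}. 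I would also remark that the same argument, run with $\cat{U}(n,d)$ replaced by the relevant base in the fixed-determinant case, will be reused to compute $\Pic{\cat{M}'_{lc}(n,L)}$ later in the paper.
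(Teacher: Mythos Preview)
Your proposal is correct and, for surjectivity, is essentially the same as the paper's proof: the paper also invokes Theorem~\ref{thm:1.1} together with the projective bundle formula $\Pic{\p(\Xi)} \cong \tilde{\pi}^*\Pic{\cat{U}(n,d)} \oplus \Z\,\struct{\p(\Xi)}(1)$, which is exactly your combination of \eqref{eq:projbundle} and \eqref{eq:excision}.

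The injectivity arguments differ. The paper proves injectivity directly, without passing through the compactification: given a nowhere-vanishing section $s$ of $p^*\xi$, one restricts to each fibre $p^{-1}(E) \cong \C^N$ and observes that a nowhere-vanishing algebraic function on affine space is constant; thus $s$ descends to a trivialization of $\xi$. Your argument instead stays inside $\p(\Xi)$: from $j^*(\pi^*\xi)$ trivial you conclude $\pi^*\xi \cong \struct{\p(\Xi)}(k)$ via excision, and then force $k=0$ by restricting to a projective fibre. Both are valid. The paper's route is slightly more elementary (it needs only the affine structure of the fibres of $p$, not the full embedding into $\p(\Xi)$), while yours has the virtue of uniformity --- the same two ingredients \eqref{eq:projbundle} and \eqref{eq:excision} handle both directions at once. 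Your remark that the identification $\struct{\p(\Xi)}(H) \cong \struct{\p(\Xi)}(1)$ is the real content of ``hyperplane at infinity'' in Theorem~\ref{thm:1.1} is exactly the point, and the paper leans on it in the same way.
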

 
 Now, fix a holomorphic line bundle $L$ over $X$ of degree $d$, and fix a logarithmic connection $D_{L}$ on $L$ singular over $S$ with residues $Res(D_L, x_j) = n \lambda_j$ for all $j = 1, \ldots, m$. Let $\cat{M}_{lc}(n,L) \subset \cat{M}_{lc}(n,d)$ be the moduli space 
 parametrising isomorphism class of pairs $(E,D)$ such that $(\bigwedge^n E, \tilde{D}) \cong (L,D_L)$, where $\tilde{D}$ is the logarithmic connection on 
 $\bigwedge^n E$ induced by $D$. Let $\cat{M'}_{lc}(n,L) = \cat{M}_{lc}(n,L) \cap \cat{M}'_{lc}(n,d)$ and $\cat{U}_{L}(n,d) \subset \cat{U}(n,d)$ be the moduli space of stable vector bundles with $\bigwedge^n E \cong L$. 
 Similarly, we have a natural morphism 
\begin{equation}
\label{eq:1.25}
p_0: \cat{M}'_{lc}(n,L) \to \cat{U}_{L}(n,d)
\end{equation} 
 of varieties, that induces a homomorphism of Picard groups, and we have 
 \begin{proposition}
 \label{prop:1.2}
 The homomorphism $p_0^*: Pic(\cat{U}_{L}(n,d)) \to Pic(\cat{M}'_{lc}(n,L))$ defined by $\xi \mapsto p_0^* \xi$ is an isomorphism of groups.
 \end{proposition}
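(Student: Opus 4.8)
The argument is the fixed-determinant analogue of the proof of Theorem~\ref{thm:1.2}. The plan is to exhibit $p_0$ as a Zariski-locally trivial affine bundle over the smooth projective variety $\cat{U}_{L}(n,d)$ and then to invoke homotopy invariance of the Picard group. Recall that $\cat{U}_{L}(n,d)$ is smooth, projective and irreducible of dimension $(n^{2}-1)(g-1)$, and that, since $\gcd(n,d)=1$, there is a universal bundle $\cat{E}_{L}$ on $X\times\cat{U}_{L}(n,d)$; write $q_1,q_2$ for the two projections and let $\mathcal{A}d(\cat{E}_{L})\subset\END{\cat{E}_{L}}$ be the subsheaf of trace-free endomorphisms, which does not depend on the choice of $\cat{E}_{L}$. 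For a stable $E$ with $\bigwedge^{n}E\cong L$, fix an isomorphism $\psi\colon\bigwedge^{n}E\to L$; since $\psi$ is unique up to a scalar, and scalars act trivially on connections in a line bundle, the fibre $p_0^{-1}([E])$ is exactly the set of logarithmic connections $D$ on $E$ with $\mathrm{Res}(D,x_j)=\lambda_j\,\id{E(x_j)}$ for all $j$ and with $\tilde D=\psi^{*}D_{L}$ on $\bigwedge^{n}E$. The difference of two such connections is $\struct{X}$-linear, trace-free (same induced determinant connection) and has vanishing residue at every $x_j$ (same residues), hence is a holomorphic section of $\ad{E}\otimes K_{X}$. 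Thus every non-empty fibre of $p_0$ is an affine space modelled on $H^{0}(X,\ad{E}\otimes K_{X})\cong H^{1}(X,\ad{E})^{*}$, the cotangent space of $\cat{U}_{L}(n,d)$ at $[E]$.

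Next I would globalise this. For every stable $E$ one has $h^{0}(X,\ad{E}\otimes K_{X})=(n^{2}-1)(g-1)$ and $h^{1}(X,\ad{E}\otimes K_{X})=h^{0}(X,\ad{E})=0$, so by cohomology and base change $W_{L}:=q_{2*}\!\big(\mathcal{A}d(\cat{E}_{L})\otimes q_{1}^{*}K_{X}\big)$ is a vector bundle on $\cat{U}_{L}(n,d)$ of rank $(n^{2}-1)(g-1)$, canonically isomorphic to the cotangent bundle, and its formation commutes with base change. Exactly as for $p$ in the proof of Theorem~\ref{thm:1.2}, one checks that $p_0$ is a torsor under $W_{L}$; in particular $\cat{M}'_{lc}(n,L)$ is smooth of dimension $2(n^{2}-1)(g-1)$. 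The one extra input required is that $p_0$ be surjective, i.e.\ that every stable $E$ with $\bigwedge^{n}E\cong L$ carry a logarithmic connection with residues $\lambda_j\,\id{E(x_j)}$; this is precisely where the hypotheses $\gcd(n,d)=1$ and $n\lambda_j\in\Z$ enter, through the existence statement recalled in the Introduction (see \cite{B}, Proposition $1.2$). Given such a $D'$, the connections $\tilde{D'}$ and $\psi^{*}D_{L}$ on $\bigwedge^{n}E$ have the same residues and hence differ by some $\eta\in H^{0}(X,K_{X})$, and then $D'+\tfrac1n\,\eta\cdot\id{E}\in p_0^{-1}([E])$; so $p_0$ is onto, and being a torsor under a vector bundle it is Zariski-locally trivial, its class in $H^{1}(\cat{U}_{L}(n,d),W_{L})$ restricting to $0$ over every affine open.

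With the affine-bundle structure in hand the Picard statement is formal, and identical in shape to the end of the proof of Theorem~\ref{thm:1.2}. Both $\cat{M}'_{lc}(n,L)$ and $\cat{U}_{L}(n,d)$ are smooth over $\C$, so their first Chow groups coincide with their Picard groups, and for the affine bundle $p_0$ the flat pull-back on codimension-one cycles $p_0^{*}\colon\mathrm{CH}^{1}(\cat{U}_{L}(n,d))\to\mathrm{CH}^{1}(\cat{M}'_{lc}(n,L))$ is an isomorphism by homotopy invariance of Chow groups; under the identification of $\mathrm{CH}^{1}$ with $\Pic{\cat{U}_{L}(n,d)}$ and with $\Pic{\cat{M}'_{lc}(n,L)}$ this map is precisely $\xi\mapsto p_0^{*}\xi$, which is the assertion. (Alternatively, cover $\cat{U}_{L}(n,d)$ by affine opens over which $p_0$ is trivial, use $\Pic{U\times\mathbf{A}^{N}}\cong\Pic{U}$ for each such smooth $U$, and glue, noting injectivity of $p_0^{*}$ on the overlaps.) The step genuinely requiring care is the one singled out above, namely verifying that $p_0$ is a surjective affine bundle, i.e.\ the existence on an \emph{arbitrary} stable $E$ with $\bigwedge^{n}E\cong L$ of a logarithmic connection with the prescribed residues and induced determinant connection $D_{L}$; everything afterwards is routine and parallels Theorem~\ref{thm:1.2}.
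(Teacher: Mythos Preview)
Your proof is correct, but it takes a genuinely different route from the paper. The paper (in Proposition~\ref{prop:8}, which restates this proposition) simply refers back to the argument for Theorem~\ref{thm:1.2}: injectivity is shown by observing that a nowhere-vanishing section of $p_0^{*}\xi$ must be constant on each affine fibre $p_0^{-1}(E)\cong\C^N$ (a nowhere-vanishing algebraic function on $\C^N$ is constant) and hence descends to a trivialisation of $\xi$; surjectivity is deduced from the projective compactification $\cat{M}'_{lc}(n,L)\hookrightarrow\p(\Xi')$ of Proposition~\ref{prop:7.1} together with the standard decomposition $\Pic{\p(\Xi')}\cong\pi^{*}\Pic{\cat{U}_L(n,d)}\oplus\Z\,\struct{\p(\Xi')}(1)$, so that restricting to the open complement kills the $\struct{\p(\Xi')}(1)$ summand.

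You instead package both directions into a single appeal to homotopy invariance of $\mathrm{CH}^1$ (equivalently $\Pic$, since everything is smooth) for the Zariski-locally trivial affine bundle $p_0$. This is conceptually cleaner and dispenses with the projective compactification as an auxiliary input, at the cost of invoking a somewhat heavier intersection-theoretic fact. Your verification that $p_0$ is surjective---first producing via \cite{B} a logarithmic connection $D'$ with the prescribed residues, then correcting by $\tfrac{1}{n}\eta\cdot\id{E}$ so that the induced determinant connection matches $D_L$---is in fact more explicit than what the paper supplies, which only says to repeat the Theorem~\ref{thm:1.2} argument and does not isolate this extra step.
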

 
 From [\cite{R}, Proposition 3.4], we have $\Pic{\cat{U}_{L}(n,d)} \cong \Z$. Let $\Theta$ be the 
 ample generator of the group $\Pic{\cat{U}_{L}(n,d)}$.
 Then we have the Atiyah exact sequence (see \cite{A}) associated  to
 the line bundle $\Theta$ over $\cat{U}_{L}(n,d)$,
 \begin{equation}
 \label{eq:1.2}
0 \to \struct{\cat{U}_{L}(n,d)} \xrightarrow{\imath} 
\At{\Theta} \xrightarrow{\sigma}   T\cat{U}_{L}(n,d) 
\to 0,
\end{equation}
where $\At{\Theta} $ is called
\textbf{Atiyah algebra} of the holomorphic line bundle $\Theta$.
Let $\cat{C}(\Theta) \subset \At{\Theta}^*$  be the fibre
bundle over $\cat{U}_L(n,d)$ such that for every $U 
\subset \cat{U}_L(n,d)$ a holomorphic section of $\cat{C}
(\Theta)|_{U}$ gives a holomorphic splitting of $\eqref{eq:1.2}$. 
Then the two 
$\Omega^1_{\cat{U}_{L}(n,d)}$-\emph{torsors} $\cat{C}(\Theta)$ and $\cat{M}'_{lc}(n,L)$  on 
${\cat{U}_{L}(n,d)}$ are isomorphic.
Finally, we show that there is no non-constant algebraic 
function on $\cat{M}'_{lc}(n,L)$, by showing the following (see section \eqref{proof_thm_1.3} for the proof)
\begin{theorem}
\label{thm:1.3}
Assume that $ \text{genus}(X) \geq  3$. Then
\begin{equation}
\label{eq:1.3}
\coh{0}{\cat{C}(\Theta)}{\struct{\cat{C}(\Theta)}} =\C.
\end{equation}
\end{theorem}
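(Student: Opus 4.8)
We outline the argument. Write $M:=\cat{U}_{L}(n,d)$ and let $\phi\colon\cat{C}(\Theta)\to M$ be the projection. The plan is to push $\struct{\cat{C}(\Theta)}$ down to $M$, to exploit the fact that $\cat{C}(\Theta)$ is a \emph{non-trivial} $\Omega^{1}_{M}$-torsor, and to reduce the vanishing to a cohomological statement on $M$ which is in turn controlled by the curve $X$. By construction $\cat{C}(\Theta)=\{\,\xi\in\At{\Theta}^{*}\mid\xi(\imath(1))=1\,\}$ is a torsor under $\Omega^{1}_{M}=(TM)^{*}$, and its class in $\coh{1}{M}{\Omega^{1}_{M}}$ is the Atiyah class $c_{1}(\Theta)$; since $\Theta$ is ample, $\Pic{M}\cong\Z$ and $\coh{i}{M}{\struct{M}}=0$ for $i>0$, the group $\coh{1}{M}{\Omega^{1}_{M}}$ is one-dimensional and generated by $c_{1}(\Theta)$, which in particular is non-zero. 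As $\phi$ is an affine morphism, $\cat{A}:=\phi_{*}\struct{\cat{C}(\Theta)}$ satisfies $\coh{0}{\cat{C}(\Theta)}{\struct{\cat{C}(\Theta)}}=\coh{0}{M}{\cat{A}}$, and $\cat{A}$ carries the canonical exhaustive increasing filtration by fibrewise polynomial degree,
\[
\struct{M}=\cat{A}_{0}\subset\cat{A}_{1}\subset\cat{A}_{2}\subset\cdots,\qquad\cat{A}_{1}=\At{\Theta},\qquad\cat{A}_{k}/\cat{A}_{k-1}\cong\mathrm{Sym}^{k}(TM),
\]
with $\cat{A}$, as a filtered $\struct{M}$-module, being the sheaf of algebraic differential operators on $\Theta$. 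Hence $\coh{0}{\cat{C}(\Theta)}{\struct{\cat{C}(\Theta)}}=\bigcup_{k\ge0}\coh{0}{M}{\cat{A}_{k}}$, and it suffices to prove by induction on $k$ that $\coh{0}{M}{\cat{A}_{k}}=\C$; for $k=0$ this is clear since $M$ is irreducible and projective.

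For the inductive step, the short exact sequence $0\to\cat{A}_{k-1}\to\cat{A}_{k}\to\mathrm{Sym}^{k}(TM)\to0$ yields
\[
0\to\coh{0}{M}{\cat{A}_{k-1}}\to\coh{0}{M}{\cat{A}_{k}}\to\coh{0}{M}{\mathrm{Sym}^{k}(TM)}\xrightarrow{\ \delta_{k}\ }\coh{1}{M}{\cat{A}_{k-1}},
\]
so it is enough to check that $\delta_{k}$ is injective. Tracing through the construction of the filtration, the extension class of $0\to\mathrm{Sym}^{k-1}(TM)\to\cat{A}_{k}/\cat{A}_{k-2}\to\mathrm{Sym}^{k}(TM)\to0$ is the image of the torsor class $c_{1}(\Theta)$ under the contraction $\Omega^{1}_{M}\to\HOM{\mathrm{Sym}^{k}(TM)}{\mathrm{Sym}^{k-1}(TM)}$, so the composite of $\delta_{k}$ with the natural map $\coh{1}{M}{\cat{A}_{k-1}}\to\coh{1}{M}{\mathrm{Sym}^{k-1}(TM)}$ equals
\[
\mu_{k}\colon\ \coh{0}{M}{\mathrm{Sym}^{k}(TM)}\ \longrightarrow\ \coh{1}{M}{\mathrm{Sym}^{k-1}(TM)},\qquad\sigma\longmapsto\iota_{c_{1}(\Theta)}\,\sigma .
\]
Thus it suffices to show that $\mu_{k}$ is injective for every $k\ge1$. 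The case $k=1$ is immediate, as $M$ has no non-zero global vector field and $\coh{1}{M}{\struct{M}}=0$.

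For $k\ge2$ the plan is to descend to $X$. Fixing a universal bundle $\cat{E}$ on $X\times M$ with $q\colon X\times M\to M$ the projection, one has $TM\cong R^{1}q_{*}\,\ad{\cat{E}}$, and both $\mathrm{Sym}^{k}(TM)$ and the cohomology groups $\coh{i}{M}{\mathrm{Sym}^{k}(TM)}$ can be computed from the sheaves $\ad{\cat{E}}^{\otimes j}$ via base change, the Künneth formula and the Leray spectral sequence for $q$. The hypothesis $\text{genus}(X)\ge3$ is used precisely here: it forces $\coh{0}{X}{\ad{E}}=0$ for a stable bundle $E$ and furnishes the vanishing of the higher direct images of $\ad{\cat{E}}$, $\ad{\cat{E}}^{\otimes2},\dots$ needed for these spectral sequences to collapse in the relevant range, so that the injectivity of $\mu_{k}$ becomes a statement over $X$; one then transports $c_{1}(\Theta)$ into this description and verifies that contraction against it is injective, using the ampleness of $\Theta$ and the one-dimensionality of $\coh{1}{M}{\Omega^{1}_{M}}$. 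Finally, by the isomorphism of $\Omega^{1}_{M}$-torsors between $\cat{C}(\Theta)$ and $\cat{M}'_{lc}(n,L)$ recalled just before the statement, the same conclusion holds for $\cat{M}'_{lc}(n,L)$.

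The step I expect to be the main obstacle is the injectivity of $\mu_{k}$ for $k\ge2$ — equivalently, the assertion that none of the (non-zero) Hitchin-type sections of $\mathrm{Sym}^{k}(TM)$ is the principal symbol of a global differential operator on the line bundle $\Theta$. This is exactly where the hypothesis $\text{genus}(X)\ge3$ is indispensable, and carrying it out requires the careful cohomological analysis over $X\times M$ and the explicit form of the Atiyah class sketched above.
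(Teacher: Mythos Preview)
Your reduction is correct and coincides with the paper's: both identify global functions on $\cat{C}(\Theta)$ with $\varinjlim_k\coh{0}{M}{\cat{S}^k\At{\Theta}}$ (equivalently, with $\bigcup_k\coh{0}{M}{\cat{A}_k}$ in your notation), obtain the filtration with graded pieces $\cat{S}^k(TM)$, and reduce everything to the injectivity of the contraction map $\mu_k\colon\coh{0}{M}{\cat{S}^k TM}\to\coh{1}{M}{\cat{S}^{k-1}TM}$ by $c_1(\Theta)$. Up to this point your argument is essentially the paper's.

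The gap is exactly where you flag it: the injectivity of $\mu_k$ for $k\ge2$. Your plan --- ``descend to $X$'' via the universal bundle, K\"unneth and Leray --- is not carried out, and as stated it does not work. There is no K\"unneth-type formula expressing $\cat{S}^k(R^1q_*\ad{\cat{E}})$ as a direct image of sheaves built from $\ad{\cat{E}}^{\otimes j}$ on $X\times M$; symmetric powers of a higher direct image are not themselves direct images in any usable way, so the spectral sequences you invoke do not collapse to the groups you need, and no amount of vanishing on $X$ alone will give you control over $\coh{0}{M}{\cat{S}^k TM}$ or the map $\mu_k$. Your explanation of the role of $g\ge3$ is also off: $\coh{0}{X}{\ad{E}}=0$ for stable $E$ holds in all genera and is irrelevant here.

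The paper proves the injectivity of $\mu_k$ by an entirely different route, namely the Hitchin map $h\colon T^*M\to B_n=\bigoplus_{i=2}^{n}\coh{0}{X}{K_X^{\otimes i}}$. One first uses \eqref{eq:43} to rewrite $\bigoplus_k\coh{j}{M}{\cat{S}^kTM}$ as $\coh{j}{T^*M}{\struct{T^*M}}$. The key geometric input (from \cite{BNR}, \cite{H}) is that a generic Hitchin fibre is $A\setminus F$ with $A$ an abelian variety and $\mathrm{codim}(F,A)\ge3$; this codimension bound is where $g\ge3$ enters. It forces every algebraic function on $T^*M$ to factor through $h$, giving an isomorphism $\theta$ from $\bigoplus_k\coh{0}{M}{\cat{S}^kTM}$ onto the space $\cat{B}$ of exact $1$-forms on $B_n$. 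One then realises $\mu_k$ (up to the factor $k$) as the composite of $\theta$ with an injection $\nu\colon\cat{B}\hookrightarrow\coh{0}{T^*M}{T_h}$ into sections of the relative tangent sheaf, followed by cup product with $c_1(\eta^*\Theta)$ and the contraction $T_h\otimes T^*(T^*M)\to\struct{T^*M}$. Injectivity is finally checked by restricting to a single Hitchin fibre $h^{-1}(b)$ with $\omega(b)\neq0$ and using $\coh{1}{A\setminus F}{\struct{}}\cong\coh{1}{A}{\struct{A}}\cong\coh{0}{A}{TA}$, again available because $\mathrm{codim}(F,A)\ge3$. This Hitchin-fibration argument is the missing idea in your proposal.
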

Since $\cat{M}'_{lc}(n,L) \subset \cat{M}_{lc}(n,L)$ is an open dense subset,  $\cat{M}_{lc}(n,L)$ does 
not have any non-constant algebraic function.

In the last section\eqref{Mod-log-conn-arb}, we consider 
the moduli spaces $\cat{N}_{lc}(n,d)$ and $\cat{N}'_{lc}
(n,d)$ of logarithmic connections with arbitrary 
residues and show the similar results as Theorem 
\ref{thm:1.1} and Theorem \ref{thm:1.2}, see Theorem
\ref{thm:4} and Theorem \ref{thm:5}.
Consider the moduli spaces $\cat{N}_{lc}(n,L)$
and $\cat{N}'_{lc}(n,L)$ of logarithmic connections 
singular over $S$ with arbitrary residues in the centre 
of $\mathfrak{gl}(n,\C)$, and  let 
\begin{equation*}
\label{eq:1.4}
V = \{(\alpha_1, \ldots, \alpha_m) \in \C^m|~~ n \alpha_j \in \Z~~ \text{and}~~ d+ n \sum_{j=1}^{m} \alpha_j = 0 \}
\end{equation*}

Define a map 
\begin{equation}
\label{eq:1.5}
\Phi: \cat{N}'_{lc}(n,L) \to V
\end{equation}
by $(E,D) \mapsto (\tr{Res(D,x_1)}/n, \ldots, \tr{Res(D,x_m)}/n)$.
Then we show the following [see section \eqref{Mod-log-conn-arb}  for the proof].
\begin{theorem}
\label{thm:1.4}
Any algebraic function on $\cat{N}'_{lc}(n,L)$ factors through the surjective map $\Phi: \cat{N}'_{lc}(n,L) \to V$ as defined in \eqref{eq:1.5}.
\end{theorem}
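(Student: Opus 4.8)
The plan is to reduce Theorem \ref{thm:1.4} to the fixed-residue statement of Theorem \ref{thm:1.3}. The first, and essentially formal, observation is that the target $V$ of $\Phi$ is a \emph{discrete} subset of $\C^m$: the requirement $n\alpha_j\in\Z$ forces each coordinate $\alpha_j$ into the lattice $\tfrac1n\Z$, and the single linear relation $d+n\sum_j\alpha_j=0$ then cuts out a coset of a sublattice of rank $m-1$. Consequently an algebraic function $f$ on $\cat{N}'_{lc}(n,L)$ factors through $\Phi$ if and only if $f$ is constant on each fibre $\Phi^{-1}(\alpha)$, $\alpha=(\alpha_1,\dots,\alpha_m)\in V$; so it suffices to establish this constancy together with the surjectivity of $\Phi$.

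Fix $\alpha\in V$. Along $\Phi^{-1}(\alpha)$ the residue of $D$ at every $x_j$ is pinned down, $Res(D,x_j)=\alpha_j\,\id{E(x_j)}$, and since $\bigwedge^nE\cong L$ the induced logarithmic connection on $\bigwedge^nE$ has residue $n\alpha_j$ at $x_j$. Writing $\lambda_j:=\alpha_j$ (these satisfy $n\lambda_j\in\Z$, exactly the hypothesis imposed in the earlier sections), I claim that $\Phi^{-1}(\alpha)$ is identified with the moduli space $\cat{M}'_{lc}(n,L)$ of logarithmic connections with fixed determinant connection $(L,D_L)$, $Res(D_L,x_j)=n\alpha_j$, studied via Proposition \ref{prop:1.2} and Theorem \ref{thm:1.3}. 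Indeed, the translation action $(E,D)\mapsto(E,\,D+\tfrac1n\,\omega\otimes\id{E})$ of $\omega\in\coh{0}{X}{\Omega^1_X}$ leaves all residues unchanged and shifts the connection induced on $\bigwedge^nE$ by $\omega$; passing to the quotient by this action (equivalently, normalising the determinant connection) is how the arbitrary-residue space $\cat{N}'_{lc}(n,L)$ of Section \ref{Mod-log-conn-arb} is related to the fixed-determinant-connection space $\cat{M}'_{lc}(n,L)$, and it carries $\Phi^{-1}(\alpha)$ isomorphically onto $\cat{M}'_{lc}(n,L)$.

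Now apply Theorem \ref{thm:1.3}. By Proposition \ref{prop:1.2} together with the isomorphism of $\Omega^1_{\cat{U}_{L}(n,d)}$-torsors $\cat{M}'_{lc}(n,L)\cong\cat{C}(\Theta)$, equation \eqref{eq:1.3} yields (for $\mathrm{genus}(X)\geq 3$)
\begin{equation*}
\coh{0}{\cat{M}'_{lc}(n,L)}{\struct{\cat{M}'_{lc}(n,L)}}\;=\;\coh{0}{\cat{C}(\Theta)}{\struct{\cat{C}(\Theta)}}\;=\;\C .
\end{equation*}
Hence $f$ restricted to the fibre $\Phi^{-1}(\alpha)\cong\cat{M}'_{lc}(n,L)$ is constant, and letting $\alpha$ vary over $V$ exhibits $f$ as $\bar{f}\circ\Phi$, where $\bar{f}\colon V\to\C$ sends $\alpha$ to the value of $f$ on $\Phi^{-1}(\alpha)$. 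It remains to see that $\Phi$ is surjective: for $\alpha\in V$ the conditions $n\alpha_j\in\Z$ and $d+n\sum_j\alpha_j=0$ are exactly those ensuring, by \cite{B}, Proposition $1.2$ (cf.\ the discussion in Section \ref{Intro}) together with the constructions of \cite{N},\cite{S1}, that the moduli space of logarithmic connections of rank $n$ and degree $d$ with residues $\alpha_j\,\id{E(x_j)}$ is non-empty; since $\cat{U}_{L}(n,d)\neq\emptyset$, restricting to stable underlying bundles with determinant $L$ still gives a point, which $\Phi$ sends to $\alpha$.

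The only non-formal point is the identification in the second paragraph: one must verify that normalising, equivalently quotienting by, the $\coh{0}{X}{\Omega^1_X}$-translation action relating $\cat{N}'_{lc}(n,L)$ to $\cat{M}'_{lc}(n,L)$ is compatible with the algebraic structures and creates no additional components or singularities in the fibres of $\Phi$. Granting this, Theorem \ref{thm:1.4} is an immediate consequence of Theorem \ref{thm:1.3}, with the hypothesis $\mathrm{genus}(X)\geq 3$ inherited from there.
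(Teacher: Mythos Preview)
Your overall strategy is exactly the paper's: show that every algebraic function is constant on each fibre $\Phi^{-1}(\alpha)$ and then invoke Theorem~\ref{thm:1.3}. Where you diverge is in the justification of the identification $\Phi^{-1}(\alpha)\cong\cat{M}'_{lc}(n,L)$. Your translation argument by $\omega\in\coh{0}{X}{\Omega^1_X}$ does not do what you claim. This action shifts the induced determinant connection, so it does not preserve $\cat{N}'_{lc}(n,L)$ (where the determinant connection is fixed by condition~(3) in Section~\ref{Mod-log-conn-arb}); and even if one drops that condition, passing to the quotient by a nontrivial free $\C^g$-action is a submersion, not an isomorphism, so the sentence ``it carries $\Phi^{-1}(\alpha)$ isomorphically onto $\cat{M}'_{lc}(n,L)$'' cannot be correct as written. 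Your final paragraph in effect concedes that this step is unverified.

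The paper avoids this entirely. It simply observes that $\Phi^{-1}(\alpha)$ is, by definition, a moduli space of logarithmic connections with \emph{fixed} central residues $\alpha_j\id{E(x_j)}$ and fixed determinant, i.e.\ a space of the type $\cat{V}'_{lc}(n,L)$ introduced before Corollary~\ref{cor:1.1}. Corollary~\ref{cor:1.1} (which in turn rests on Proposition~\ref{lem:3}: both spaces are $\Omega^1_{\cat{U}_L(n,d)}$-torsors with nonzero class in the one-dimensional space $\coh{1}{\cat{U}_L(n,d)}{\Omega^1_{\cat{U}_L(n,d)}}$) then gives the isomorphism with $\cat{M}'_{lc}(n,L)$, and Theorem~\ref{thm:1.3} finishes. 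Your idea of ``writing $\lambda_j:=\alpha_j$'' is in fact the right one---it amounts to noting that the fibre is already a fixed-residue moduli space---but you should discard the translation discussion and instead cite Corollary~\ref{cor:1.1} (or reprove it) for the identification with $\cat{C}(\Theta)$.
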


We also show the similar result for $\cat{N}_{lc}(n,L)$,
see Theorem \ref{thm:7}.

\section{Preliminaries}
\label{Pre}                                                                                                                                                                                                                                                                                                                                                                                                                                                                                                                                                                        

We denote by  $S = x_1+ \cdots + x_m$ the reduced effective divisor 
on $X$ associated to the finite set $S$. Let $\Omega^1_X(\log S)$ denote the sheaf of 
logarithmic differential $1$-forms along $S$,  see \cite{S}. The notion of logarithmic connection was introduced by P. Deligne 
in \cite{D}. We recall the definition 
of logarithmic connection in a holomorphic vector bundle $E$ over $X$ singular 
over $S$ and residue of  the logarithmic connection on the points of $S$.
 
Let $E$ be a holomorphic vector bundle on $X$ of rank 
$n \geq 1$. We will denote the fibre of $E$ over any point $x 
\in X$ by $E(x)$.

A logarithmic connection  on $E$ singular over $S$ is a $
\C$-linear map 
\begin{equation}
\label{eq:3}
D : E \to E \otimes \Omega^1_X(\log S)  = E \otimes
\Omega^1_X \otimes \struct{X}(S)  
\end{equation}
which satisfies the Leibniz identity
\begin{equation}
\label{eq:4}
D(f s)= f D(s) + df \otimes s,
\end{equation}
where $f$ is a local section of \struct{X} and $s$ is a 
local section of $E$.

Let $D$ be a logarithmic connection in $E$ singular over 
$S$. For any $x_\beta \in S$, the fiber $\Omega^1_X 
\otimes \struct{X}(S)(x_\beta)$ is canonically identified 
with $\C$ by sending a meromorphic form to its residue at 
$x_\beta$. 

Let $v \in E(x_\beta)$ be any vector in the fiber of $E$
over $x_\beta$. Let $U$ be an open set around $x_\beta$ and
$s: U \to E$ be a holomorphic section of $E$ over $U$ 
such that $s(x_\beta) = v$. Consider the following 
composition 
\begin{equation}
\label{eq:5}
\Gamma(U,E) \to \Gamma(U, E \otimes\Omega^1_X \otimes 
\struct{X}(S)) \to E \otimes\Omega^1_X \otimes \struct{X}
(S)(x_\beta) = E(x_\beta),
\end{equation}
where the equality is given because of the identification
$\Omega^1_X \otimes \struct{X}(S)(x_\beta) = \C$.

Let $t$ be a local coordinate at $x_\beta$ on $U$ such that 
$t(x_\beta) = 0$, that is, the coordinate system $(U,t)$ is centered at 
$x_\beta$ and suppose that  $\sigma \in 
\Gamma(U,E)$ such that $\sigma(x_\beta) = 0$. Then $\sigma = t  \sigma'$ 
for some $\sigma' \in \Gamma(U,E)$. Now,
\begin{align*}
D(\sigma) = D(t \sigma')& = t D(\sigma') + dt \otimes \sigma' \\
& = t D(\sigma') + t (\frac{dt}{t} \otimes \sigma'),
\end{align*}
and $D(\sigma)(x_\beta) = 0$. Thus, we have a well defined 
endomorphism, denoted by 
\begin{equation}
\label{eq:6}
Res(D,x_\beta) \in \ENd{E}(x_\beta) = \ENd{E(x_\beta)}
\end{equation}
that sends $v$ to $D(s)(x_\beta)$.  This endomorphism 
$Res(D,x_\beta)$ is called the \textbf{residue} of the logarithmic 
connection $D$ at the point $x_\beta \in S$ (see \cite{D} for the details). 

If $D$ is a logarithmic connection in $E$ singular over $S$ and 
$\theta \in \coh{0}{X}{\Omega^1_X \otimes \ENd{E}}$, then $D + \theta$
is also a logarithmic connection in $E$, singular over $S$. Also,
we have 
\begin{equation*}
\label{eq:11}
Res(D, x_\beta) = Res(D + \theta, x_\beta),
\end{equation*}
  for every $ x_\beta \in S$.
  
  Conversely, if $D$ and $D'$ are two logarithmic connections on $E$
  singular over $S$ with 
  \begin{equation}
  \label{eq:12}
  Res(D,x_\beta) = Res(D', x_\beta),
  \end{equation}

then $D' = D + \theta$, where $ \theta \in \coh{0}{X}{\Omega^1_X \otimes \ENd{E}}$.

Thus, the space of all logarithmic connections $D'$ on a given 
holomorphic vector bundle $E$ singular over $S$, and satisfying 
\eqref{eq:12} with $D$ fixed, is an affine space for 
$\coh{0}{X}{\Omega^1_X \otimes \ENd{E}}$.

For each $i = 1, \ldots, m$, fix $\lambda_i \in \Q$
such that $n \lambda_i \in \Z$, where $n$ is the rank of 
the vector bundle $E$.  
By a pair $(E,D)$ over $X$, we mean that
\begin{enumerate}
\item $E$ is a holomorphic 
vector bundle of degree $d$ and rank $n$ over $X$.
\item $D$ is 
a logarithmic 
connection in $E$ singular over $S$ with residues 
$Res(D,x_i) = \lambda_i \id{E(x_i)}$  for all $i= 1, 
\ldots,m$.
\end{enumerate}
Then from \cite{O}, Theorem 3, we have 
\begin{equation}
\label{eq:15}
d + n \sum_{j=1}^{m} \lambda_i = 0
\end{equation}

\begin{lemma}
\label{lem:1}
Let $(E,D)$ be a logarithmic connection on $X$. 
Suppose that $F$ is a holomorphic subbundle of $E$ such 
that the restriction $D^\prime = D|_F$ of $D$ to $F$ is a 
logarithmic connection in $F$ singular over $S$. Then 
$Res(D^\prime,x_j) = \lambda_j \id{F(x_j)}$ for all $j = 1, \ldots, m$.
\end{lemma}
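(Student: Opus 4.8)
The plan is to show that the residue of the restricted connection $D' = \restrict{D}{F}$ at a point $x_j \in S$ is simply the restriction of the residue endomorphism $Res(D,x_j) \in \ENd{E(x_j)}$ to the subspace $F(x_j) \subset E(x_j)$. Since $(E,D)$ is a pair in the sense fixed above, $Res(D,x_j) = \lambda_j \id{E(x_j)}$, and a scalar endomorphism restricts to the same scalar on any subspace; so this identification will immediately yield $Res(D',x_j) = \lambda_j \id{F(x_j)}$ for all $j$.

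First I would fix $x_j \in S$ and an arbitrary vector $v \in F(x_j)$. Choose an open neighbourhood $U$ of $x_j$ and a holomorphic section $s \colon U \to F$ with $s(x_j) = v$; composing with the inclusion $F \hookrightarrow E$ we may also view $s$ as a local section of $E$ with $s(x_j) = v$. By hypothesis $D' = \restrict{D}{F}$, which means precisely that $D(s) = D'(s)$ as local sections of $E \otimes \Omega^1_X(\log S)$, the right-hand side being a local section of the subsheaf $F \otimes \Omega^1_X(\log S)$. Now evaluate via the composition \eqref{eq:5} at $x_j$, using the canonical identification $\Omega^1_X \otimes \struct{X}(S)(x_j) = \C$: unwinding the definition \eqref{eq:6}, applied to $E$ this computes $Res(D,x_j)(v) = D(s)(x_j)$, while applied to $F$ it computes $Res(D',x_j)(v) = D'(s)(x_j) \in F(x_j)$. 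Since $D(s) = D'(s)$ and the identifications are compatible with the inclusion $F \otimes \Omega^1_X(\log S) \hookrightarrow E \otimes \Omega^1_X(\log S)$, we get $Res(D,x_j)(v) = Res(D',x_j)(v)$ for every $v \in F(x_j)$; in particular $F(x_j)$ is preserved by $Res(D,x_j)$ and $\restrict{Res(D,x_j)}{F(x_j)} = Res(D',x_j)$.

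Finally, because $(E,D)$ is a pair, $Res(D,x_j) = \lambda_j \id{E(x_j)}$, whose restriction to $F(x_j)$ is $\lambda_j \id{F(x_j)}$; combining with the previous step gives $Res(D',x_j) = \lambda_j \id{F(x_j)}$, as claimed. The only point needing care is checking that the evaluation map \eqref{eq:5}–\eqref{eq:6} is independent of the chosen local section and is compatible with the subbundle inclusion $F \subset E$ — but this is exactly the well-definedness already verified in the Preliminaries (the computation showing $D(\sigma)(x_j) = 0$ whenever $\sigma(x_j) = 0$), so there is no genuine obstacle here.
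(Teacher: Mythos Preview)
Your argument is correct and is exactly the paper's approach: the paper's proof is the one-liner ``Follows from the definition of residues,'' and you have simply unpacked that statement by tracing through the construction of $Res(D,x_j)$ in \eqref{eq:5}--\eqref{eq:6} and observing its compatibility with the inclusion $F\hookrightarrow E$.
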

\begin{proof}
Follows from the  definition of residues.
\end{proof}

A logarithmic connection $D$ in a holomorphic vector bundle $E$ 
is called \textbf{irreducible}  if for any holomorphic  subbundle $F$
of $E$ with 
$D(F) \subset \Omega^1_X(\log S) \otimes F $, then either $F = E$ or 
$F = 0$.

\begin{proposition}
\label{pro:2} 
Let $(E,D)$ be a logarithmic connection on $X$.
Suppose that $n$ and $d$ are mutually coprime. Then  $D$ is irreducible.
\end{proposition}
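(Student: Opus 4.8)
The plan is to argue by contradiction, exploiting the coprimality of $n$ and $d$ to rule out any nontrivial $D$-invariant subbundle. Suppose $D$ is not irreducible, so there is a holomorphic subbundle $F \subset E$ with $0 \neq F \neq E$ and $D(F) \subset \Omega^1_X(\log S) \otimes F$. Then $D' := D|_F$ is a logarithmic connection on $F$ singular over $S$, and by Lemma \ref{lem:1} its residues satisfy $\mathrm{Res}(D',x_j) = \lambda_j \id{F(x_j)}$ for all $j = 1,\ldots,m$. Writing $n' = \rk{}{F}$ and $d' = \deg F$, I can apply the degree relation \eqref{eq:15} — which was derived in \cite{O}, Theorem 3 for any pair consisting of a bundle with a logarithmic connection of scalar residues — to the pair $(F,D')$. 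This gives
\begin{equation*}
d' + n' \sum_{j=1}^{m} \lambda_j = 0.
\end{equation*}

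Comparing this with the relation $d + n \sum_{j=1}^m \lambda_j = 0$ for $(E,D)$, I get $\sum_{j=1}^m \lambda_j = -d/n = -d'/n'$, hence $d' n = d n'$, i.e. $d/n = d'/n'$ as rational numbers. Since $\gcd(n,d) = 1$, the fraction $d/n$ is in lowest terms, so $n \mid n'$; but $1 \le n' < n$ forces a contradiction. Therefore no such $F$ exists and $D$ is irreducible.

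The main obstacle — really the only point requiring care — is justifying that the degree identity \eqref{eq:15} applies verbatim to the subbundle $F$ with the restricted connection $D'$. The identity from \cite{O}, Theorem 3 is a statement about the sum of residues of a logarithmic connection computing (minus) the degree of the bundle; what I need is that $\mathrm{Res}(D',x_j)$, as an endomorphism of $F(x_j)$, really is $\lambda_j \id{F(x_j)}$, so that its trace is $n' \lambda_j$, and that the Fuchs/Ohtsuki relation $\deg F = -\sum_j \tr{\mathrm{Res}(D',x_j)}$ holds. The first point is exactly Lemma \ref{lem:1}; the second is the content of \cite{O}, Theorem 3 applied to $(F, D')$, and I would simply cite it. One should also note that $D(F) \subset \Omega^1_X(\log S)\otimes F$ is precisely the condition that $D|_F$ be a well-defined logarithmic connection on $F$ singular over $S$, so the hypothesis of Lemma \ref{lem:1} is met. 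With these pieces in place the numerical contradiction is immediate, and the proof is short.
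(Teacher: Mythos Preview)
Your proof is correct and follows essentially the same route as the paper: restrict $D$ to a putative invariant subbundle, invoke Lemma~\ref{lem:1} and \cite{O}, Theorem~3 to get the degree relation for $(F,D')$, and then use $\gcd(n,d)=1$ to derive a numerical contradiction from the equality of slopes. Your divisibility step ($n\mid n'$ forcing $n'\ge n$) is in fact spelled out more cleanly than the paper's version.
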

\begin{proof}
Let $0 \neq F$ be a holomorphic subbundle of $E$ of rank $r$ invariant under 
$D$, that is,  $D(F) \subset F \otimes \Omega^1_X(\log S)$. Set $D^\prime = D|_F$.
Then from Lemma \ref{lem:1}, $Res(D^\prime,x_i) = \lambda_i \id{F(x_i)}$, and from
\cite{O} Theorem 3, we have 
\begin{equation}
\label{eq:16}
\text{degree}(F)+ r \sum_{i=1}^m \lambda_i = 0.
\end{equation}
From \eqref{eq:15} and \eqref{eq:16}, we get that $\mu(F) = \mu(E)$. Since $F$ is a subbundle of $E$, if rank of $F$ is 
less than rank of $E$, we get that $n|d$, which is a contradiction. Thus $F = E$.
\end{proof}

\section{The moduli space of logarithmic connections with fixed residues}
\label{Mod-log-conn}
 We say two pairs $(E,D)$ and $(E',D')$ of rank $n$ and degree 
$d$  are isomorphic if there 
exists an isomorphism $\Phi: E \to E'$ such that the following diagram  
\begin{equation}
\label{eq:18}
\xymatrix{
E \ar[d]^{\Phi} \ar[r]^D & E \otimes \Omega^1_X(\log S) \ar[d]^{\Phi \otimes \id{\Omega^1_X(\log S)}} \\
E' \ar[r]^{D'} & E' \otimes \Omega^1_X(\log S)\\
}
\end{equation}
commutes.

Let $\cat{M}_{lc}(n,d)$ denote the moduli space which 
parametrizes the isomorphic class of  pairs $(E,D)$.
Then $\cat{M}_{lc}(n,d)$ is a separated quasi-projective 
scheme over $\C$ [see \cite{N}, Theorem 3.5].

Henceforth, we will assume following conditions
\begin{enumerate}
\item  $d$ and $n$ are  mutually coprime.
\item   for each $i = 1, \ldots, m$, $\lambda_i \in \Q$
such that $n \lambda_i \in \Z$.
\item $d,n, \lambda_1,..., \lambda_m$ satisfies following
relation
\begin{equation}
\label{eq:19}
d + n \sum_{i=1}^{m} \lambda_i = 0.
\end{equation}
\end{enumerate}

Under the above condtions, from the Proposition \ref{pro:2}, every logarithmic connection $(E,D)$
in $\cat{M}_{lc}(n,d)$ is irreducible. 
Since the singular points of  $\cat{M}_{lc}(n,d)$ corresponds 
to reducible logarithmic connections (see second paragraph on p.n. 
$790$ of \cite{BR}),
 the moduli
space $\cat{M}_{lc}(n,d)$ is smooth. 

The similar technique as in  \cite{S2}, Theorem $11.1$, can be 
used to show that $\cat{M}_{lc}(n,d)$ is an irreducible variety.
Thus,  altogether $\cat{M}_{lc}(n,d)$ is an irreducible smooth 
 quasi-projective variety over $\C$.

 From [\cite{M} Theorem 2.8(A)],  $\cat{M'}_{lc}(n,d) $ is a Zariski open subset of $\cat{M}_{lc}(n,d)$.  Since $\cat{M}_{lc}(n,d)$ irreducible, 
 $\cat{M}'_{lc}(n,d)$ is dense.

Consider $\cat{M}_{lc}(n,L)$ as defined above. Then it  is a closed subvariety of 
$\cat{M}_{lc}(n,d)$. Moreover,
 $\cat{M'}_{lc}(n,L) = \cat{M}_{lc}(n,L) \cap \cat{M}'_{lc}(n,d)$ is a  Zariski open dense subset of $\cat{M}_{lc}(n,L)$.

In particular, if we take  $L_0 = \otimes_{i = 1}^{m}
\struct{X}(-n \lambda_i x_i) $ and $D_{L_0}$ the 
logarithmic connection defined by the de Rham 
differential, then  $D_{L_0}$ is singular 
over $S$ with residues 
$Res(D_{L_0},x_i) = n \lambda_i $ for all $i = 1, 
\ldots,m$. For this pair $(L_0,D_{L_0})$ we denote 
the moduli spaces $\cat{M}_{lc}(n,L)$ and $\cat{M'}_{lc}
(n,L)$ by $\cat{M}_{lc}(n,L_0)$ and 
$\cat{M'}_{lc}(n,L_0)$ respectively.

Let $X_0 = X 
\setminus S$ and $x_0 \in X_0$.
Let $U_j$ be a simply connected open set in $X_0 \cup 
\{x_j\}$ containing $x_0$ and $x_j$.  Then $\pi_1(U_j 
\setminus \{x_j\}, x_0) \cong \Z$, where $1$ corresponds 
to the anticlockwise loop around $x_j$. We have a natural
 group homomorphism 
\begin{equation*}
\label{eq:20.9}
h_j: \pi_1(U_j\setminus \{x_j\}, x_0) \to \pi_1(X_0, 
x_0).
\end{equation*}
for all $j = 1, \ldots,m $.
Suppose that $h_j(1) = \gamma_j$ for all $j = 1, \ldots,m
$. Then $\pi_1(X_0,x_0)$
admits a presentation with $2g+m$ generators $a_1,b_1, 
\ldots, a_g,b_g, \gamma_1, \ldots, \gamma_m$ with 
relation $\Pi_{i=1}^{g} [a_i, b_i] \Pi_{j=1}^{m} \gamma_j 
= 1$.  

Let $(E,D) \in \cat{M}_{lc}(n,L_0)$. Then $D$ determines
a holomorphic (flat) connection on the holomorphic vector
bundle $E|_{X_0}$ restricted to $X_0$. Since $Res(D,x_j)
= \lambda_j \id{E(x_j)}$, for $j = 1, \ldots, m$, the 
image of $\gamma_j$ under the monodromy representation
is the $n \times n$ diagonal matrix with $\exp(-2 \pi 
\sqrt{-1} \lambda_j )$ (see \cite{D}, p.79, Proposition 3.11). 
Let $\cat{R}_g \subset \Hom{\pi_1(X_0,x_0)}{\text{SL}(n,
\C)}$denote the space of those representations $\rho: 
\pi_1(X_0,x_0) \to \text{SL}(n,\C)$ such that $
\rho(\gamma_j) = \exp(- 2 \pi \sqrt{-1} \lambda_j) 
\text{I}_{n \times n}$ for all $j = 1, \ldots, m$, where $\text{I}_{n \times n}$ 
denotes the $n \times n$ identity matrix.  Since the 
logarithmic connection $D$ is irreducible, any representation in $\cat{R}_g$  is irreducible. 
Consider the action of $\text{SL}(n, \C)$ on $\cat{R}_g$
by conjugation, that is, for any $T \in \text{SL}(n, \C)$
and  $\rho \in \cat{R}_g$ the action is defined by
$\rho . T = T^{-1} \rho T$. Let $\cat{B}_g = \cat{R}_g / 
\text{SL}(n, \C)$ be the quotient space for the 
conjugation action. The algebraic structure of $\cat{R}_g
$ induces an algebraic structure on $\cat{B}_g$. In 
literature, $\cat{B}_g$ is known as \textbf{Betti moduli 
space} (for instance see \cite{S1}, \cite{S2}) and it is an
irreducible smooth quasi-projective variety over $\C$.
Thus, we have a holomorphic  map 
\begin{equation}
\label{eq:21}
\Phi: \cat{M}_{lc}(n,L_0) \to \cat{B}_g
\end{equation}
sending $(E,D)$ to the equivalence class of its monodromy 
representaion under the conjugation action of $\text{SL}(n,\C)$.

For the inverse map of $\Phi$,  let $\rho \in \cat{B}_g$. 
Let $(E_\rho,\nabla_\rho)$ be the flat holomorphic vector 
bundle over $X_0$ associated to $\rho$. Then 
$E_\rho$ over $X_0$ extends to a holomorphic vector 
bundle $\bar{E_\rho}$ over $X$, and  the connection $
\nabla_\rho$ on $E_\rho$ extends to a connection 
$\bar{\nabla_\rho}$ such that $(\bar{E_
 \rho},\bar{\nabla_\rho}) \in \cat{M}_{lc}(n,L_0)$[See 
 \cite{BM}, p.159, Theorem 4.4]. Thus, $\Phi$ is a biholomorphism.

\section{The Picard group of moduli space of logarithmic 
connections}
\label{Pic}

Let 
 \begin{equation*}
 \label{eq:20}
 p: \cat{M}'_{lc}{(n,d)} \to \cat{U}(n,d)
 \end{equation*}
 be the forgetful map which forgets its 
 logarithmic structure as defined in \eqref{eq:0}.
 
 Let $E \in \cat{U}(n,d)$. Then $E$ is indecomposable. 
Since $d,n$ satisfy equation $\eqref{eq:19}$, from 
\cite{B},
Proposition $1.2$, $E$ admits a logarithmic connection $D$ singular 
over $S$, with residues $Res(D,x_j) = \lambda_j
\id{E(x_j)}$ for all $j = 1, \ldots, m$. 

Thus, the pair $(E,D)$ is in the moduli space $
\cat{M}'_{lc}(n,d)$, and hence $p$ is surjective.

\subsection{Torsors}
We recall the definition of torsors and will show that 
the map $p: \cat{M}'_{lc}{(n,d)} \to \cat{U}(n,d)$ is a $
\Omega^1_{\cat{U}(n,d)}$-\emph{torsor} on $\cat{U}(n,d)$, 
where $\Omega^1_{\cat{U}(n,d)}$ denotes the holomorphic
cotangent bundle over $\cat{U}(n,d)$.

Let $M$ be a connected complex manifold. Let $\pi: 
\cat{V} \to M$, be a 
holomorphic vector bundle.

A $\cat{V}$-\emph{torsor} on $M$ is a holomorphic fiber 
bundle $p: Z \to M$,
and holomorphic map from the fiber product
\begin{equation*}
\label{eq:20.1}
\varphi: Z \times_M \cat{V} \to Z
\end{equation*} 

such that
\begin{enumerate}
\item $p \circ \varphi = p \circ p_Z$, where $p_Z$ is the 
natural projection
of   $Z \times_M \cat{V} $ to $Z$,
\item the map $Z \times_M \cat{V} \to Z \times_M  Z$ 
defined by $p_Z \times 
\varphi$ is an isomorphism,
\item $\varphi(\varphi(z,v),w) = \varphi(z, v+w)$.
\end{enumerate}

\begin{proposition}
\label{prop:4.5}
The isomorphic classes of $\cat{V}$-torsors over $M$ are 
parametrized by $\coh{1}{M}{\cat{V}}$.
\end{proposition}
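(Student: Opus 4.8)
The plan is to establish the classical bijection between isomorphism classes of $\cat{V}$-torsors on $M$ and $\coh{1}{M}{\cat{V}}$ by producing a cocycle from a torsor and, conversely, a torsor from a cocycle, and then checking these constructions are mutually inverse and respect the equivalence relations on both sides. The key technical input is that a $\cat{V}$-torsor is locally trivial: over a sufficiently fine open cover $\{U_\alpha\}$ of $M$ (which we may take to trivialize $\cat{V}$ as well), each restriction $p^{-1}(U_\alpha) \to U_\alpha$ admits a holomorphic section $s_\alpha$. Such sections exist because condition (2) identifies $Z \times_M \cat{V}$ with $Z \times_M Z$, so locally $Z$ looks like $\cat{V}$, which certainly has local sections (e.g. the zero section).

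\textbf{From a torsor to a cocycle.} Given local sections $s_\alpha \colon U_\alpha \to Z$, on each overlap $U_{\alpha\beta} = U_\alpha \cap U_\beta$ the two sections $s_\alpha$ and $s_\beta$ differ by a unique local section of $\cat{V}$: by condition (2), for each $x \in U_{\alpha\beta}$ there is a unique $v_{\alpha\beta}(x) \in \cat{V}_x$ with $\varphi(s_\beta(x), v_{\alpha\beta}(x)) = s_\alpha(x)$, and $x \mapsto v_{\alpha\beta}(x)$ is holomorphic since it is obtained by composing holomorphic maps (the inverse of $p_Z \times \varphi$ applied to $(s_\beta, s_\alpha)$, followed by projection to $\cat{V}$). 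On triple overlaps, condition (3) gives $\varphi(s_\gamma, v_{\alpha\beta} + v_{\beta\gamma}) = \varphi(\varphi(s_\gamma, v_{\beta\gamma}), v_{\alpha\beta}) = \varphi(s_\beta, v_{\alpha\beta}) = s_\alpha = \varphi(s_\gamma, v_{\alpha\gamma})$, and uniqueness forces $v_{\alpha\beta} + v_{\beta\gamma} = v_{\alpha\gamma}$, the Čech cocycle condition. A different choice of sections $s'_\alpha = \varphi(s_\alpha, t_\alpha)$ with $t_\alpha \in \Gamma(U_\alpha, \cat{V})$ changes $v_{\alpha\beta}$ by the coboundary $t_\alpha - t_\beta$, so the class $[\{v_{\alpha\beta}\}] \in \coh{1}{M}{\cat{V}}$ is well defined; one also checks refinement of the cover does not change it, so the class is intrinsic to the torsor.

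\textbf{From a cocycle to a torsor.} Given a cocycle $\{v_{\alpha\beta}\} \in \coh{1}{M}{\cat{V}}$ relative to a cover $\{U_\alpha\}$ trivializing $\cat{V}$, form $Z$ by gluing the trivial pieces $U_\alpha \times \C^r$ (where $r = \rk{\cat{V}}$, identified fibrewise with $\cat{V}$) along $U_{\alpha\beta}$ via $(x, w) \sim (x, w + v_{\alpha\beta}(x))$; the cocycle condition makes this an equivalence relation, so $Z$ is a well-defined complex manifold with a holomorphic submersion $p \colon Z \to M$, and the fibrewise addition of $\cat{V}$ descends to a map $\varphi \colon Z \times_M \cat{V} \to Z$ satisfying (1)--(3). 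Tracing through, the torsor built from a cocycle has that same cocycle as its class, and the cocycle extracted from a torsor rebuilds an isomorphic torsor; here an \emph{isomorphism} of $\cat{V}$-torsors means a biholomorphism over $M$ commuting with $\varphi$.

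\textbf{The main obstacle} is purely bookkeeping: verifying that the equivalence relations match up, i.e. that two torsors are isomorphic as $\cat{V}$-torsors if and only if their Čech classes coincide. The ``only if'' direction requires noting that an isomorphism $Z \to Z'$ carries sections $s_\alpha$ to sections of $Z'$ and hence yields the same $v_{\alpha\beta}$; the ``if'' direction requires building the isomorphism locally as $(x,w) \mapsto (x, w + t_\alpha(x))$ from a chain $t_\alpha$ realizing the coboundary relating the two cocycles, then checking these local maps glue. None of this is deep, but it must be done carefully, and it is the only place where all three torsor axioms are simultaneously in play. I would present the cocycle-extraction step in full and treat the gluing verifications more briefly, since they are standard descent arguments.
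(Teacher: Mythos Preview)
Your argument is the standard \v{C}ech-cohomological proof and is correct. Note, however, that the paper does not actually prove Proposition~\ref{prop:4.5}: it is stated without proof, as a well-known classical fact about torsors (and is used later only to invoke that the isomorphism class of a torsor corresponds to a cohomology class, in the proof of Proposition~\ref{lem:3}). So there is no ``paper's approach'' to compare against; your write-up supplies exactly the kind of proof one would give if asked to fill in the omitted justification, and nothing in it is missing or incorrect.
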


\begin{proposition}
\label{prop:5} Let $p:\cat{M}'_{lc}{(n,d)} \to \cat{U}
(n,d)$ be the map as defined in 
\eqref{eq:0}. Then $\cat{M}'_{lc}{(n,d)}$ is a $
\Omega^1_{\cat{U}(n,d)}$-\emph{torsor} on 
${\cat{U}(n,d)}$.
\end{proposition}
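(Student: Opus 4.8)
**Proof proposal for Proposition 3.17 (that $p \colon \cat{M}'_{lc}(n,d) \to \cat{U}(n,d)$ is an $\Omega^1_{\cat{U}(n,d)}$-torsor).**

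The plan is to verify the three axioms in the definition of a torsor directly, using the affine-space structure established in the Preliminaries. Recall from the discussion around \eqref{eq:12} that, for a fixed holomorphic vector bundle $E$ carrying at least one logarithmic connection $D_0$ with the prescribed central residues, the set of all logarithmic connections on $E$ with those residues is an affine space modelled on $\coh{0}{X}{\Omega^1_X \otimes \ENd{E}}$; concretely every such $D$ is of the form $D_0 + \theta$ with $\theta \in \coh{0}{X}{\Omega^1_X \otimes \ENd{E}}$. The first step is to identify this vector space with the fibre of $\Omega^1_{\cat{U}(n,d)}$ over the point $[E]$. This is the standard deformation-theoretic fact that the tangent space $T_{[E]}\cat{U}(n,d) \cong \coh{1}{X}{\ENd{E}}$ (noting $\ENd{E}$ has a trace-free part and a scalar part, and for stable $E$ of coprime rank and degree the relevant identification goes through), so by Serre duality the cotangent fibre is $\coh{0}{X}{\Omega^1_X \otimes \ENd{E}^*} \cong \coh{0}{X}{\Omega^1_X \otimes \ENd{E}}$ since $\ENd{E}$ is self-dual via the trace pairing. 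Thus fibrewise $p^{-1}([E])$ is a torsor over $\Omega^1_{\cat{U}(n,d),[E]}$ in the naive set-theoretic sense.

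Next I would promote this fibrewise statement to a statement of holomorphic fibre bundles over $\cat{U}(n,d)$. Here I would either invoke the universal (Poincar\'e) bundle $\cat{E}$ on $X \times \cat{U}(n,d)$ — which exists because $\gcd(n,d)=1$ — and realise both $\cat{M}'_{lc}(n,d)$ and $\Omega^1_{\cat{U}(n,d)}$ as pushforwards of natural sheaves on $X \times \cat{U}(n,d)$ along the projection to $\cat{U}(n,d)$, or else argue more directly: the relative construction of logarithmic connections with fixed residues on the family $\cat{E}$ yields an affine bundle over $\cat{U}(n,d)$ whose associated vector bundle is $R^0\pi_{2*}(\Omega^1_{X}(\log S) \boxtimes \text{id} \otimes \SEnd(\cat{E}))$, and relative Serre duality identifies this with $\Omega^1_{\cat{U}(n,d)}$ (the logarithmic twist along $S$ is harmless because the residue is fixed, so only the $\Omega^1_X$-part, not the polar part, contributes to the affine modelling space). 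This gives the holomorphic fibre bundle structure $p \colon \cat{M}'_{lc}(n,d) \to \cat{U}(n,d)$ together with the action map $\varphi \colon \cat{M}'_{lc}(n,d) \times_{\cat{U}(n,d)} \Omega^1_{\cat{U}(n,d)} \to \cat{M}'_{lc}(n,d)$ sending $((E,D),\theta)$ to $(E, D+\theta)$.

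Finally I would check axioms (1)–(3). Axiom (1), $p \circ \varphi = p \circ p_Z$, is immediate since $\varphi$ does not change the underlying bundle $E$. Axiom (3), $\varphi(\varphi(z,v),w) = \varphi(z,v+w)$, is immediate from $(D+\theta)+\theta' = D+(\theta+\theta')$. Axiom (2), that $p_Z \times \varphi$ is an isomorphism onto $Z \times_M Z$, follows from the two facts that the action on each fibre is free (if $D + \theta = D$ then $\theta = 0$) and transitive (any two connections with the same residues differ by such a $\theta$, by the converse statement following \eqref{eq:12}); the inverse map sends a pair $((E,D),(E,D'))$ to $((E,D), D'-D)$, which is holomorphic because subtraction of connections is. The main obstacle I anticipate is the relative/universal-bundle bookkeeping in the middle step — making precise that the relative space of fixed-residue logarithmic connections forms an algebraic affine bundle over $\cat{U}(n,d)$ and that its linear part is canonically $\Omega^1_{\cat{U}(n,d)}$ rather than merely fibrewise isomorphic to it; the set-theoretic torsor axioms themselves are essentially formal once that identification is in hand.
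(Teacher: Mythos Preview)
Your proposal is correct and follows essentially the same approach as the paper: identify the fibre $p^{-1}(E)$ as an affine space modelled on $\coh{0}{X}{\Omega^1_X \otimes \ENd{E}}$, identify this vector space with the cotangent fibre $\Omega^1_{\cat{U}(n,d),E}$, and then observe that the action $(\omega,D)\mapsto D+\omega$ is free and transitive on fibres. The paper's proof is considerably terser---it simply asserts the cotangent identification and the faithfulness/transitivity of the action without spelling out Serre duality, the Poincar\'e bundle, or the axiom-by-axiom verification---so your additional detail on the relative construction and the explicit check of axioms (1)--(3) goes beyond what the paper records, but the underlying argument is the same.
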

\begin{proof}
Let $E \in {\cat{U}(n,d)}$. Then $p^{-1}(E) \subset 
\cat{M}'_{lc}(n,d)$ is an affine space 
for $\coh{0}{X}{\Omega^1_X \otimes \ENd{E}}$ and the 
fiber of the cotangent bundle $ \pi:
\Omega^1_{\cat{U}(n,d)}\to \cat{U}(n,d)$ at $E$ is 
isomorphic to $\coh{0}{X}{\Omega^1_X \otimes \ENd{E}}$, 
that is, 
$\Omega^1_{\cat{U}(n,d),E} \cong \coh{0}{X}{\Omega^1_X 
\otimes \ENd{E}}$. There is a natural
action of $\Omega^1_{\cat{U}(n,d),E}$ on $p^{-1}(E)$, 
that is,
\begin{equation*}
\label{eq:20.2}
\Omega^1_{\cat{U}(n,d),E} \times p^{-1}(E) \to p^{-1}(E)
\end{equation*}
sending $(\omega, D)$ to $\omega + D$.
This action on the fibre is faithful and transitive.
This action will induce a holomorphic map on the fibre 
product
\begin{equation}
\label{eq:20.3}
\varphi: \Omega^1_{\cat{U}(n,d)} \times_{\cat{U}(n,d)} \cat{M}'_{lc}{(n,d)} \to \cat{M}'_{lc}{(n,d)},
\end{equation}
which satisfies the above conditions in the definition of 
the torsor.

\end{proof}

\subsection{The Picard group of  moduli space of logarithmic connection}
\label{Picard}

\begin{remark}[]
Note that $p: \cat{M}'_{lc}(n,d) \to \cat{U}(n,d)$ as defined in \eqref{eq:0} is a fibre bundle (not a vector bundle)  with 
fibre $p^{-1}(E)$ which is an affine space modelled over $\coh{0}{X}{\Omega^1_X \otimes \ENd{E}}$. Moreover, from 
Proposition \ref{prop:5}, $\cat{M}'_{lc}{(n,d)}$ is a $\Omega^1_{\cat{U}(n,d)}$-\emph{torsor} on 
${\cat{U}(n,d)}$.  We know that the dual of an affine space (modelled over a 
vector space over $\C$) is a vector space over $\C$, in the same spirit, 
 the dual of a torsor
is a vector bundle. We use this fact to construct an algebraic vector bundle over 
${\cat{U}(n,d)}$. For another construction of this algebraic 
vector bundle over ${\cat{U}(n,d)}$, see the Remark $3.2$ and the third paragraph 
on the p.n.$792$ in \cite{BR}.
\end{remark}

\begin{proof}[\bf Proof of Theorem  \ref{thm:1.1}]
\label{proof_thm_1.1}
For
any $E \in \cat{U}(n,d)$, the fiber $p^{-1}(E)$ is an affine space modelled on
 $\coh{0}{X}{\Omega^1_X \otimes \ENd{E}}$.  The dual $$p^{-1}(E)^{\vee} = \{\varphi: p^{-1}
 (E) \to \C  ~\vert~ \varphi~ \mbox{is an affine linear map} \}$$  is a vector space over $\C$. 
 
 Let $\pi: \Xi \to \cat{U}(n,d)$ be the algebraic vector bundle such that for every
 Zariski open subset $U$ of $\cat{U}(n,d)$, a section of $\Xi$ over $U$ is an  
 algebraic function $f: p^{-1}(U) \to \C$ whose restriction to each fiber 
 $p^{-1}(E)$,  is an element of $p^{-1}(E)^{\vee}$. Thus, a fiber
 $\Xi(E) = \pi^{-1}(E)$ of
 $\Xi$ at $E \in \cat{U}(n,d)$ is $p^{-1}(E)^{\vee}$. Let $(E,D) \in \cat{M}'_{lc}(n,d)$,
 and define a map $\Phi_{(E,D)}: p^{-1}(E)^{\vee} \to \C$, by $\Phi_{(E,D)}(\varphi) = \varphi[(E,D)]$, which is nothing but the evaluation map. Now,
 the kernel $\Ker{\Phi_{(E,D)}}$ defines a hyperplane in $p^{-1}(E)^{\vee}$
 denoted by $H_{(E,D)}$. Let $\p (\Xi)$ be a projective bundle defined by hyperplanes in the fiber $p^{-1}(E)^{\vee}$, that is, we have $\tilde{\pi}: \p(\Xi) \to \cat{U}(n,d)$ induced from $\pi$. Define a map $\iota: \cat{M}'_{lc}(n,d)
  \to \p(\Xi)$ by sending  $(E,D)$ to the equivalence class of  $H_{(E,D)}$,
  which is clearly an open embedding. Set $Y = \p (\Xi) \setminus \cat{M}'_{lc}(n,d)$.
  Then $\tilde{\pi}^{-1}(E) \cap Y$ is a projective hyperplane in $\tilde{\pi}^{-1}(E)$ for every $E \in \cat{U}(n,d)$, and hence $Y$ is a hyperplane at infinity. This completes the proof.
  \end{proof}

The techniques used in the proof of the following theorem are
adopted from the proof of the  Theorem 3.1 in \cite{BR}.

\begin{proof}[\bf Proof of Theorem \ref{thm:1.2}]
\label{proof_thm_1.2}
First we show that $p^*$ in \eqref{eq:1.1} is injective.  Let $\xi \to 
\cat{U}(n,d)$ be an algebraic line bundle such that $p^* \xi$ is a trivial line bundle
over $\cat{M}'_{lc}(n,d)$. Giving a trivialization of $p^* \xi$ is equivalent to
giving a nowhere vanishing section of $p^* \xi$ over $\cat{M}'_{lc}(n,d)$. Fix 
$s \in \coh{0}{\cat{M}'_{lc}(n,d)}{p^* \xi}$ a nowhere vanishing section. Take any
point $E \in \cat{U}(n,d)$.  Then, 
\begin{equation*}
\label{eq:C}
s|_{p^{-1}(E)}: p^{-1}(E) \to \xi(E)
\end{equation*}
is a nowhere vanishing map. Notice that $p^{-1}(E) \cong \C^N$ and $\xi(E) \cong \C$, where $N = n^2(g-1)+1$. Now, any nowhere vanishing algebraic function on an affine space
$\C^N$ is a constant function, that is, $s|_{p^{-1}(E)}$ is a constant
function and hence corresponds to a non-zero vector $\alpha_{E} \in \xi(E)$.
Since $s$ is constant on each fiber of $p$,  the trivialization $s$ of $p^*\xi$ descends to a trivialization of the
line bundle $\xi$ over $\cat{U}(n,d)$, and hence giving a nowhere vanishing
section of $\xi$ over $\cat{U}(n,d)$. Thus, $\xi$ is a trivial line bundle
over $\cat{U}(n,d)$.  The surjectivity of $p^*$ follows from the Theorem
\ref{thm:1.1} and the fact that $ \Pic {\p (\Xi)} \cong \tilde{ \pi}^*\Pic{\cat{U}(n,d)}\oplus \Z \struct{\p (\Xi)}(1)$.

\end{proof}  

\section{Algebraic functions on the moduli space}
\label{fun}
Let $\Omega^1_{\cat{U}_{L}(n,d)}$ denote the holomorphic cotangent bundle on $\cat{U}_{L}(n,d)$.
Then, we have following proposition.

\begin{proposition}
\label{prop:7}
Let $p_0:\cat{M}'_{lc}{(n,L)} \to \cat{U}_{L}
(n,d)$ be the map as defined in 
\eqref{eq:1.25}. Then $\cat{M}'_{lc}{(n,L)}$ is a 
$\Omega^1_{\cat{U}_{L}(n,d)}$-\emph{torsor} on 
${\cat{U}_{L}(n,d)}$.
\end{proposition}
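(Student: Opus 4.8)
The plan is to mirror the proof of Proposition \ref{prop:5} verbatim, replacing $\cat{U}(n,d)$ by $\cat{U}_L(n,d)$, $\cat{M}'_{lc}(n,d)$ by $\cat{M}'_{lc}(n,L)$, and the affine space $\coh{0}{X}{\Omega^1_X \otimes \ENd{E}}$ by its trace-free analogue. First I would recall that $\cat{M}'_{lc}(n,L) = \cat{M}_{lc}(n,L) \cap \cat{M}'_{lc}(n,d)$, so a point of $\cat{M}'_{lc}(n,L)$ is a pair $(E,D)$ with $E$ stable, $\bigwedge^n E \cong L$, and $(\bigwedge^n E, \tilde D) \cong (L, D_L)$. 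Fix $E \in \cat{U}_L(n,d)$. As in the Preliminaries, the set of logarithmic connections on $E$ with the prescribed residues $\lambda_j \id{E(x_j)}$ is an affine space over $\coh{0}{X}{\Omega^1_X \otimes \ENd{E}}$; the extra constraint defining $p_0^{-1}(E)$ is that the induced connection on $\bigwedge^n E$ be exactly $D_L$ (not just isomorphic to it — stability of $E$ rigidifies the isomorphism up to scalars, and the induced connection on the determinant is unchanged by scaling). Concretely, if $D_0$ is one such connection, then any other is $D_0 + \theta$ with $\theta \in \coh{0}{X}{\Omega^1_X \otimes \ENd{E}}$, and the induced connection on $\bigwedge^n E$ changes by $\tr{\theta}$; so $D_0 + \theta$ lies over $D_L$ precisely when $\tr{\theta} = 0$. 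Hence $p_0^{-1}(E)$ is a non-empty affine space modelled over $\coh{0}{X}{\Omega^1_X \otimes \END{E}}$, where $\END{E} \subset \ENd{E}$ is the trace-free part, equivalently $\coh{0}{X}{\Omega^1_X \otimes \ad{E}}$.

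Next I would identify this modelling vector space with the fibre of the cotangent bundle. By deformation theory for stable bundles with fixed determinant, the tangent space $T_E \cat{U}_L(n,d) \cong \coh{1}{X}{\ad{E}}$, and Serre duality on the compact Riemann surface $X$ gives $\coh{1}{X}{\ad{E}}^* \cong \coh{0}{X}{\Omega^1_X \otimes \ad{E}^*} \cong \coh{0}{X}{\Omega^1_X \otimes \ad{E}}$ using the Killing-form self-duality of $\ad{E}$. Thus $\Omega^1_{\cat{U}_L(n,d),E} \cong \coh{0}{X}{\Omega^1_X \otimes \ad{E}}$, canonically enough to vary holomorphically (algebraically) in $E$. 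This is the step where I would want to be slightly careful: that these fibrewise identifications globalise to an isomorphism of the affine bundle $p_0$ with an $\Omega^1_{\cat{U}_L(n,d)}$-torsor — but this is exactly the content already used in Proposition \ref{prop:5} in the non-fixed-determinant case (and it is standard for the Hitchin-type description of moduli of connections), so I would simply invoke the same argument. Indeed one can also deduce it formally: $\cat{M}'_{lc}(n,L) \hookrightarrow \cat{M}'_{lc}(n,d)$ sits over the embedding $\cat{U}_L(n,d) \hookrightarrow \cat{U}(n,d)$, and $\cat{M}'_{lc}(n,L)$ is cut out fibrewise by the affine-linear condition $\tr{D} = D_L$, so it is a sub-affine-bundle whose modelling bundle is the kernel of $\Omega^1_{\cat{U}(n,d)}|_{\cat{U}_L(n,d)} \to \struct{\cat{U}_L(n,d)}$ induced by the trace, and that kernel is $\Omega^1_{\cat{U}_L(n,d)}$.

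Finally, to conclude I would exhibit the torsor structure map exactly as in Proposition \ref{prop:5}: there is a faithful transitive fibrewise action
\begin{equation*}
\Omega^1_{\cat{U}_L(n,d),E} \times p_0^{-1}(E) \to p_0^{-1}(E), \qquad (\omega, D) \mapsto \omega + D,
\end{equation*}
which globalises to a holomorphic (indeed algebraic) map
\begin{equation*}
\varphi \colon \Omega^1_{\cat{U}_L(n,d)} \times_{\cat{U}_L(n,d)} \cat{M}'_{lc}(n,L) \to \cat{M}'_{lc}(n,L)
\end{equation*}
satisfying $p_0 \circ \varphi = p_0 \circ p_{\cat{M}'_{lc}(n,L)}$, with $p_{\cat{M}} \times \varphi$ an isomorphism onto $\cat{M}'_{lc}(n,L) \times_{\cat{U}_L(n,d)} \cat{M}'_{lc}(n,L)$ (because the action is simply transitive on each fibre), and $\varphi(\varphi(z,v),w) = \varphi(z, v+w)$. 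These are precisely the torsor axioms, so $\cat{M}'_{lc}(n,L)$ is an $\Omega^1_{\cat{U}_L(n,d)}$-torsor on $\cat{U}_L(n,d)$. The main obstacle, such as it is, is not any hard estimate but the bookkeeping of the determinant constraint — checking that fixing $(\bigwedge^n E, \tilde D) \cong (L, D_L)$ rather than merely $\bigwedge^n E \cong L$ is what makes $p_0^{-1}(E)$ an affine space over the trace-free part and nothing smaller; once that is pinned down, everything else is a transcription of the already-proven Proposition \ref{prop:5}.
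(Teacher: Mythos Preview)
Your proposal is correct and follows essentially the same approach as the paper's proof: both identify $p_0^{-1}(E)$ as an affine space modelled over $\coh{0}{X}{\Omega^1_X \otimes \ad{E}}$, identify this with the cotangent fibre $\Omega^1_{\cat{U}_L(n,d),E}$, and check that the natural action $(\omega,D)\mapsto \omega+D$ is faithful and transitive. Your version simply supplies more of the bookkeeping (the trace computation explaining why the determinant constraint cuts down to $\ad{E}$, the Serre-duality identification of the cotangent fibre, and the globalisation via restriction from Proposition~\ref{prop:5}) that the paper's terse proof leaves implicit.
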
  
 \begin{proof}
 First note that for any $E \in \cat{U}_{L}(n,d)$, the 
 holomorphic cotangent space $\Omega^1_{\cat{U}_{L}
 (n,d),E}$ at $E$ is isomorphic to $\coh{0}{X}{\Omega^1_X 
 \otimes \ad{E}}$, where $\ad{E} \subset \ENd{E}$ is the 
 subbundle consists of endomorphism of $E$ whose trace is 
 zero. Also, $p_0^{-1}(E)$ is an affine space modelled 
 over $\coh{0}{X}{\Omega^1_X \otimes \ad{E}}$. Thus, 
 there is a natural action of $\Omega^1_{\cat{U}_{L}
 (n,d),E}$  on $p_{0}^{-1}(E)$, that is,
\begin{equation*}
\label{eq:22.1}
\Omega^1_{\cat{U}_{L}(n,d),E} \times p_{0}^{-1}(E) \to 
p_0^{-1}(E)
\end{equation*}
sending $(\omega, D)$ to $\omega + D$, which is faithful
and transitive.
 \end{proof}
 
 \begin{proposition}
 \label{prop:7.1}
 There exists an algebraic vector bundle $\pi:\Xi' \to \cat{U}_L(n,d)$ such that 
$\cat{M}'_{lc}(n,L)$ is embedded in $\p (\Xi')$ with $\p (\Xi') \setminus \cat{M}'_{lc}(n,L)$ as the
hyperplane at infinity.
\end{proposition}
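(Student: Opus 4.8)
The plan is to mimic exactly the proof of Theorem~\ref{thm:1.1}, replacing the forgetful map $p$ and the moduli space $\cat{U}(n,d)$ by the determinant-fixed versions $p_0$ and $\cat{U}_L(n,d)$, and using Proposition~\ref{prop:7} in place of Proposition~\ref{prop:5}. First I would fix, for each $E \in \cat{U}_L(n,d)$, the affine space $p_0^{-1}(E)$ modelled on $\coh{0}{X}{\Omega^1_X \otimes \ad{E}}$ (this is the content of Proposition~\ref{prop:7}), and pass to the linear dual
\[
p_0^{-1}(E)^{\vee} = \{\varphi : p_0^{-1}(E) \to \C \mid \varphi \text{ is an affine linear map}\},
\]
which is a finite-dimensional $\C$-vector space of dimension one more than $\dim{\cat{U}_L(n,d)} = (n^2-1)(g-1)$.

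Next I would assemble these fibrewise duals into an algebraic vector bundle $\pi : \Xi' \to \cat{U}_L(n,d)$ exactly as in the proof of Theorem~\ref{thm:1.1}: for a Zariski open $U \subset \cat{U}_L(n,d)$, a section of $\Xi'$ over $U$ is an algebraic function $f : p_0^{-1}(U) \to \C$ whose restriction to each fibre $p_0^{-1}(E)$ lies in $p_0^{-1}(E)^{\vee}$, so that $\Xi'(E) = p_0^{-1}(E)^{\vee}$. For each $(E,D) \in \cat{M}'_{lc}(n,L)$ the evaluation map $\Phi_{(E,D)} : p_0^{-1}(E)^{\vee} \to \C$, $\varphi \mapsto \varphi[(E,D)]$, is a nonzero linear functional, whose kernel $H_{(E,D)}$ is a hyperplane in $p_0^{-1}(E)^{\vee}$. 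Letting $\tilde{\pi} : \p(\Xi') \to \cat{U}_L(n,d)$ be the projective bundle of hyperplanes in the fibres, the assignment $(E,D) \mapsto [H_{(E,D)}]$ gives a morphism $\iota : \cat{M}'_{lc}(n,L) \to \p(\Xi')$, which is an open embedding because within each fibre it is the standard embedding of an affine space into its projective completion via the dual construction (a point of the affine space $p_0^{-1}(E)$ is determined by the hyperplane of affine functionals vanishing on it). Finally, setting $Y = \p(\Xi') \setminus \iota(\cat{M}'_{lc}(n,L))$, one checks that $\tilde{\pi}^{-1}(E) \cap Y$ is the projective hyperplane in $\tilde{\pi}^{-1}(E)$ consisting of the hyperplanes of $p_0^{-1}(E)^{\vee}$ that contain the distinguished line of constant functionals — equivalently the "points at infinity" — so $Y$ is a hyperplane subbundle at infinity.

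The only step requiring care is verifying that the fibrewise dual construction genuinely globalises to an \emph{algebraic} vector bundle of constant rank over $\cat{U}_L(n,d)$; but this is immediate from Proposition~\ref{prop:7}, since a $\Omega^1_{\cat{U}_L(n,d)}$-torsor is locally (in the Zariski topology) isomorphic to the trivial torsor, and over such a trivialising open set the associated bundle of affine-linear functionals is visibly the trivial bundle with fibre $(\Omega^1_{\cat{U}_L(n,d),E} \oplus \C)^{\vee}$; the transition functions are affine-linear in the torsor coordinates, hence linear on the duals, and algebraic. Because this is word-for-word parallel to the argument already given for Theorem~\ref{thm:1.1}, with $\ENd{E}$ replaced by $\ad{E}$ throughout, I expect no genuine obstacle — the proof can be phrased as "identical to the proof of Theorem~\ref{thm:1.1}, using Proposition~\ref{prop:7} in place of Proposition~\ref{prop:5}."
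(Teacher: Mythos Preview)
Your proposal is correct and takes exactly the same approach as the paper; indeed, the paper's own proof of Proposition~\ref{prop:7.1} consists entirely of the sentence ``See the proof of the Theorem~\ref{thm:1.1},'' so you have in fact supplied more detail than the paper does. Your explicit invocation of Proposition~\ref{prop:7} (the $\Omega^1_{\cat{U}_L(n,d)}$-torsor structure) in place of Proposition~\ref{prop:5}, and of $\ad{E}$ in place of $\ENd{E}$, is precisely the intended adaptation.
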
 
\begin{proof} See the proof of the  Theorem \ref{thm:1.1}.
\end{proof}

 \begin{proposition}
 \label{prop:8}
 The homomorphism $p_0^*: Pic(\cat{U}_{L}(n,d)) \to Pic(\cat{M}'_{lc}(n,L))$ defined by $\xi \mapsto p_0^* \xi$ is an isomorphism of groups.
 \end{proposition}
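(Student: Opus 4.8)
The plan is to mimic the proof of Theorem~\ref{thm:1.2} almost verbatim, replacing $\cat{U}(n,d)$ by $\cat{U}_L(n,d)$, the map $p$ by $p_0$, and the bundle $\Xi$ by the bundle $\Xi'$ supplied by Proposition~\ref{prop:7.1}. First I would establish injectivity of $p_0^*$. Let $\xi \to \cat{U}_L(n,d)$ be an algebraic line bundle with $p_0^*\xi$ trivial on $\cat{M}'_{lc}(n,L)$, and fix a nowhere vanishing section $s \in \coh{0}{\cat{M}'_{lc}(n,L)}{p_0^*\xi}$. For each $E \in \cat{U}_L(n,d)$, Proposition~\ref{prop:7} identifies the fibre $p_0^{-1}(E)$ with an affine space modelled on $\coh{0}{X}{\Omega^1_X \otimes \ad{E}}$, hence $p_0^{-1}(E) \cong \C^{N'}$ where $N' = \dim{\cat{U}_L(n,d)} = (n^2-1)(g-1)$. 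The restriction $s|_{p_0^{-1}(E)}$ is then a nowhere vanishing algebraic function on an affine space, hence constant, yielding a nonzero vector $\alpha_E \in \xi(E)$. Since $s$ is constant on every fibre of $p_0$, it descends to a nowhere vanishing section of $\xi$ over $\cat{U}_L(n,d)$, so $\xi$ is trivial; this gives injectivity.

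Next I would establish surjectivity. Here I invoke Proposition~\ref{prop:7.1}: there is an algebraic vector bundle $\pi: \Xi' \to \cat{U}_L(n,d)$ with $\cat{M}'_{lc}(n,L)$ embedded as an open subset of $\p(\Xi')$, the complement being the hyperplane at infinity. By the standard computation of the Picard group of a projective bundle, $\Pic{\p(\Xi')} \cong \tilde{\pi}^*\Pic{\cat{U}_L(n,d)} \oplus \Z\,\struct{\p(\Xi')}(1)$, where $\tilde{\pi}: \p(\Xi') \to \cat{U}_L(n,d)$ is the induced projection. Restricting to the open subset $\cat{M}'_{lc}(n,L)$ and using that the complement $\p(\Xi') \setminus \cat{M}'_{lc}(n,L)$ is an irreducible divisor (a sub-projective-bundle of relative codimension one), the restriction map $\Pic{\p(\Xi')} \to \Pic{\cat{M}'_{lc}(n,L)}$ is surjective, and under it the summand $\tilde{\pi}^*\Pic{\cat{U}_L(n,d)}$ maps onto $p_0^*\Pic{\cat{U}_L(n,d)}$ while $\struct{\p(\Xi')}(1)$ maps onto the class of that divisor, which is trivial in $\Pic{\cat{M}'_{lc}(n,L)}$ once it is removed. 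Hence every line bundle on $\cat{M}'_{lc}(n,L)$ is in the image of $p_0^*$.

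I expect the main technical point to be the surjectivity half, specifically making the restriction-to-open-subset argument rigorous: one needs that removing the single irreducible hyperplane-at-infinity divisor kills exactly the $\struct{\p(\Xi')}(1)$ factor and nothing more, which is the content of the excision sequence for Picard/class groups of a smooth variety minus an irreducible divisor. Since $\cat{M}'_{lc}(n,L)$ is smooth (being an open subset of the smooth $\cat{M}_{lc}(n,d)$, intersected with the smooth fibre $\cat{M}_{lc}(n,L)$) and $\p(\Xi')$ is smooth, this excision sequence applies cleanly. The injectivity half is routine given Proposition~\ref{prop:7}. I would therefore present the proof as a short remark that the argument of Theorem~\ref{thm:1.2} carries over, spelling out only the two places where $\cat{U}_L(n,d)$, $p_0$ and $\Xi'$ replace their counterparts, and citing Propositions~\ref{prop:7} and~\ref{prop:7.1}.

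\begin{proof}
We first show that $p_0^*$ is injective. Let $\xi \to \cat{U}_L(n,d)$ be an algebraic line bundle such that $p_0^*\xi$ is trivial over $\cat{M}'_{lc}(n,L)$, and fix a nowhere vanishing section $s \in \coh{0}{\cat{M}'_{lc}(n,L)}{p_0^*\xi}$. For any $E \in \cat{U}_L(n,d)$, by Proposition \ref{prop:7} the fibre $p_0^{-1}(E)$ is an affine space modelled over $\coh{0}{X}{\Omega^1_X \otimes \ad{E}}$, hence $p_0^{-1}(E) \cong \C^{N'}$ with $N' = \dim{\cat{U}_L(n,d)}$, while $\xi(E) \cong \C$. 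The restriction $s|_{p_0^{-1}(E)}: p_0^{-1}(E) \to \xi(E)$ is a nowhere vanishing algebraic function on an affine space, hence constant, corresponding to a nonzero vector $\alpha_E \in \xi(E)$. Since $s$ is constant on each fibre of $p_0$, it descends to a nowhere vanishing section of $\xi$ over $\cat{U}_L(n,d)$, so $\xi$ is trivial. This proves injectivity.

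For surjectivity, by Proposition \ref{prop:7.1} there is an algebraic vector bundle $\pi: \Xi' \to \cat{U}_L(n,d)$ with $\cat{M}'_{lc}(n,L)$ embedded as a Zariski open subset of $\p(\Xi')$ whose complement is the hyperplane at infinity, an irreducible divisor. Let $\tilde{\pi}: \p(\Xi') \to \cat{U}_L(n,d)$ be the induced projection. Then
\begin{equation*}
\Pic{\p(\Xi')} \cong \tilde{\pi}^*\Pic{\cat{U}_L(n,d)} \oplus \Z\,\struct{\p(\Xi')}(1).
\end{equation*}
Since $\p(\Xi')$ and $\cat{M}'_{lc}(n,L)$ are smooth and the complement $\p(\Xi') \setminus \cat{M}'_{lc}(n,L)$ is an irreducible divisor, the restriction homomorphism $\Pic{\p(\Xi')} \to \Pic{\cat{M}'_{lc}(n,L)}$ is surjective; moreover it carries $\tilde{\pi}^*\Pic{\cat{U}_L(n,d)}$ onto $p_0^*\Pic{\cat{U}_L(n,d)}$ and sends $\struct{\p(\Xi')}(1)$ to the class of that divisor, which becomes trivial after its removal. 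Hence $\Pic{\cat{M}'_{lc}(n,L)} = p_0^*\Pic{\cat{U}_L(n,d)}$, so $p_0^*$ is surjective. Combined with the injectivity above, $p_0^*$ is an isomorphism of groups.
\end{proof}
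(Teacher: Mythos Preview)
Your proposal is correct and follows precisely the approach the paper intends: the paper's own proof of Proposition~\ref{prop:8} consists solely of the phrase ``See the proof of the Theorem~\ref{thm:1.2}'', and you have faithfully carried out that substitution, replacing $\cat{U}(n,d)$, $p$, $\Xi$ by $\cat{U}_L(n,d)$, $p_0$, $\Xi'$ and invoking Propositions~\ref{prop:7} and~\ref{prop:7.1} in place of Proposition~\ref{prop:5} and Theorem~\ref{thm:1.1}. If anything, your write-up is more careful than the paper's, since you spell out the excision step for the hyperplane-at-infinity divisor that the paper leaves implicit.
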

\begin{proof} See the proof of the Theorem \ref{thm:1.2}.
\end{proof}

Now, from \cite{R}, Proposition 3.4, (ii), we have $Pic(\cat{U}_{L}(n,d)) \cong \Z$. Thus, in view of 
Proposition \ref{prop:8}, we have 
\begin{equation}
\label{eq:23}
Pic(\cat{M}'_{lc}(n,L)) \cong \Z.
\end{equation}  

Let $\Theta$ be the ample generator of the group $Pic(\cat{U}_{L}(n,d))$. 
We have the \emph{symbol exact sequence} for the holomorphic 
line bundle $\Theta$ given as follows, 
\begin{equation}
\label{eq:24}
0 \to \END[\struct{\cat{U}_{L}(n,d)}]{\Theta} 
\xrightarrow{\imath} \DifF[1]{\Theta}{\Theta} 
\xrightarrow{\sigma}  T\cat{U}_{L}(n,d) \otimes 
{\END[\struct{\cat{U}_{L}(n,d)}]{\Theta}} \to 0,
\end{equation}
where $\DifF[1]{\Theta}{\Theta}$ denotes the sheaf of 
first order holomorphic differential operator from $
\Theta$ to itself, and $T\cat{U}_{L}(n,d)$ is the 
holomorphic tangent bundle over $\cat{U}_{L}(n,d)$.
Since $\Theta$ is a holomorphic line bundle, the \emph{symbol 
exact sequence} \eqref{eq:24} becomes \eqref{eq:1.2} because in that case
  $ \At{\Theta} = \DifF[1]{\Theta}{\Theta}$, for more details see \cite{A}, 
\cite{BR1}.

Dualising the exact sequence \eqref{eq:1.2},
we get following exact sequence,

\begin{equation}
\label{eq:26}
0 \to \Omega^1_{\cat{U}_{L}(n,d)} 
\xrightarrow{\sigma^*} 
\At{\Theta}^* \xrightarrow{\imath^*}   \struct{\cat{U}
_{L}(n,d)} \to 0
\end{equation}

Consider $\struct{\cat{U}_{L}(n,d)}$ as trivial line 
bundle $\cat{U}_{L}(n,d) \times \C$. Let $s: \cat{U}
_{L}(n,d) \to \cat{U}_{L}(n,d) \times \C$ be a
holomorphic map defined by $E \mapsto (E,1)$. Then $s$ is
a holomorphic  section of the trivial line bundle $\cat{U}_{L}(n,d) \times \C$.

Let $S = \Img{s} \subset \cat{U}_{L}(n,d) \times \C$ be the image of $s$. 
Then $S \to \cat{U}_{L}(n,d)$ is a fibre bundle.
Consider the inverse image ${\imath^*}^{-1}S \subset 
\At{\Theta}^*$, and denote it by $\cat{C}(\Theta)$.
Then for every open subset $U \subset \cat{U}_{L}(n,d) 
$, a holomorphic section of $\cat{C}(\Theta)|_{U}$ over $U$ gives a holomorphic
splitting of  \eqref{eq:1.2}. 
 For instance, suppose $\gamma: U \to \cat{C}(\Theta)|
_{U}$ is a holomorphic section. Then $\gamma$ will be a 
holomorphic section of $\At{\Theta}^*|_{U}$  over $U$, 
because $ \cat{C}(\Theta) =
{\imath^*}^{-1}S \subset \At{\Theta}^*$. Since $ 
 \gamma \circ \imath =  \imath^*(\gamma) = \id{U}$, so we 
 get a holomorphic  splitting $\gamma$ of \eqref{eq:1.2}.
  Thus, $\Theta|_{U}$ admits 
 a holomorphic connection. 
 Conversely, given any holomorphic splitting of 
 \eqref{eq:1.2} over an open subset $U \subset \cat{U}_{L}(n,d)$, we get a holomorphic section of $\cat{C}(\Theta)|_{U}$.
 
 Let
 \begin{equation}
 \label{eq:26.1}
 \psi: \cat{C}(\Theta) \to \cat{U}_{L}(n,d)
 \end{equation}
 be the canonical projection.
Then using the short exact sequence \eqref{eq:26}, $\cat{C}(\Theta)$ is a 
$\Omega^1_{\cat{U}_{L}(n,d)}$-\emph{torsor} on 
${\cat{U}_{L}(n,d)}$

\begin{proposition}
\label{lem:3}
There is an isomorphism of algebraic varieties
\begin{equation}
\label{eq:27.1} f : \cat{C}(\Theta) \to \cat{M}'_{lc}{(n,L)}
\end{equation}
such that $p_0 \circ f = \psi$, where $p_0$ and $\psi$ 
are defined in \eqref{eq:1.25} and \eqref{eq:26.1} 
respectively.
\end{proposition}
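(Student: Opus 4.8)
The plan is to show that both $\cat{C}(\Theta)$ and $\cat{M}'_{lc}(n,L)$ are $\Omega^1_{\cat{U}_{L}(n,d)}$-torsors over $\cat{U}_{L}(n,d)$ associated to the \emph{same} class in $\coh{1}{\cat{U}_{L}(n,d)}{\Omega^1_{\cat{U}_{L}(n,d)}}$, and then invoke Proposition \ref{prop:4.5}. That $\cat{M}'_{lc}(n,L)$ is such a torsor is Proposition \ref{prop:7}, and that $\cat{C}(\Theta)$ is such a torsor was established just above via the dualised sequence \eqref{eq:26}. So the real content is the identification of the two torsor classes, together with upgrading the resulting biholomorphism (or analytic isomorphism of torsors) to an isomorphism of algebraic varieties satisfying $p_0 \circ f = \psi$.

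First I would recall that the class in $\coh{1}{\cat{U}_{L}(n,d)}{\Omega^1_{\cat{U}_{L}(n,d)}}$ classifying $\cat{C}(\Theta)$ is, by construction, the Atiyah class $\mathrm{at}(\Theta)$ of the ample generator $\Theta$: a local section of $\cat{C}(\Theta)$ is exactly a local holomorphic splitting of the Atiyah sequence \eqref{eq:1.2}, i.e. a local holomorphic connection on $\Theta$, and the obstruction to a global such splitting is precisely $\mathrm{at}(\Theta)$. Next I would identify the class of the torsor $\cat{M}'_{lc}(n,L) \to \cat{U}_{L}(n,d)$. The fibre over $E$ is the affine space of logarithmic connections on $E$ with the prescribed central residues; locally on $\cat{U}_{L}(n,d)$ one can always choose such a connection (using \cite{B}, Proposition 1.2, or a relative version), so the obstruction to a global section is a class in $\coh{1}{\cat{U}_{L}(n,d)}{\Omega^1_{\cat{U}_{L}(n,d)}}$. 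The standard computation — present in \cite{BR} for the one-point case, and this is where I would follow their argument closely — is that this class equals $\frac{1}{N}\,\mathrm{at}(\Theta)$ up to the normalisation fixed by $\Pic{\cat{U}_{L}(n,d)} \cong \Z\langle\Theta\rangle$; more precisely, the determinant-of-cohomology line bundle whose Atiyah class governs the space of connections on the universal family is a known multiple of $\Theta$, and one checks the two classes agree on the nose (or rescales $\Theta$ accordingly). Granting the equality of classes, Proposition \ref{prop:4.5} gives an isomorphism of $\Omega^1_{\cat{U}_{L}(n,d)}$-torsors $f: \cat{C}(\Theta) \to \cat{M}'_{lc}(n,L)$, which by definition of a morphism of torsors commutes with the projections, i.e. $p_0 \circ f = \psi$.

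The remaining point is algebraicity: Proposition \ref{prop:4.5} as stated is about holomorphic torsors, so a priori $f$ is only an isomorphism of complex manifolds. To see it is algebraic I would argue that both total spaces are smooth quasi-projective varieties (each is a Zariski-locally-trivial affine bundle over the projective variety $\cat{U}_{L}(n,d)$ — indeed both are torsors under the algebraic vector bundle $\Omega^1_{\cat{U}_{L}(n,d)}$, hence are Zariski-locally isomorphic to $\Omega^1_{\cat{U}_{L}(n,d)}$), and the comparison of their classes can be run in the algebraic category, with $\coh{1}$ taken in the Zariski/coherent sense; since $\cat{U}_{L}(n,d)$ is a smooth projective variety, GAGA identifies the algebraic and analytic $\coh{1}{\cat{U}_{L}(n,d)}{\Omega^1_{\cat{U}_{L}(n,d)}}$, so an analytic isomorphism of these torsors is automatically algebraic. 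Alternatively, one produces $f$ directly on Zariski-open trivialising sets of $\cat{U}_{L}(n,d)$ by the affine-linear formula dictated by the common cocycle and checks it glues.

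The main obstacle I anticipate is the explicit identification of the torsor class of $\cat{M}'_{lc}(n,L)$ with the Atiyah class $\mathrm{at}(\Theta)$: this requires knowing precisely which multiple of $\Theta$ arises as the relevant determinant-of-cohomology bundle over $\cat{U}_{L}(n,d)$ and verifying the normalisations match, which is the technical heart of the corresponding argument in \cite{BR} and the step most sensitive to the conventions in \eqref{eq:19} and the choice of residues. Everything else — the torsor descriptions, Proposition \ref{prop:4.5}, and the GAGA-type algebraicity upgrade — is formal once that identification is in hand.
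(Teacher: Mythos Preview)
Your proposal is correct in outline but takes a harder route than the paper. You aim to identify the torsor class of $\cat{M}'_{lc}(n,L)$ explicitly with the Atiyah class of $\Theta$ (or a specific multiple of it), and you rightly flag this identification as the main obstacle. The paper sidesteps that computation entirely: since $\dim[\C]{\coh{1}{\cat{U}_{L}(n,d)}{\Omega^1_{\cat{U}_{L}(n,d)}}} = 1$, the two torsor classes $\alpha$ (for $\cat{C}(\Theta)$) and $\beta$ (for $\cat{M}'_{lc}(n,L)$) are automatically proportional, and a $V$-torsor with class $\alpha$ is isomorphic \emph{as a variety over the base} (though not necessarily as a $V$-torsor) to one with class $c\alpha$ for any $c \in \C^{*}$, via fibrewise rescaling in local trivialisations. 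Hence it suffices to check $\alpha \neq 0$ and $\beta \neq 0$; the paper gets $\alpha = c_1(\Theta) \neq 0$ from ampleness of $\Theta$, and $\beta \neq 0$ by citing \cite{BR2}, Theorem~2.11. Your approach would in principle yield more (an actual isomorphism of torsors, hence an affine-linear $f$), but at the cost of precisely the determinant-of-cohomology normalisation you anticipate as delicate; the paper trades that for a much softer nonvanishing input plus the one-dimensionality of $\coh{1}{\cat{U}_{L}(n,d)}{\Omega^1_{\cat{U}_{L}(n,d)}}$. Your GAGA remark on algebraicity is fine and in fact addresses a point the paper leaves implicit.
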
 
\begin{proof}
From the Proposition \ref{prop:4.5}, isomorphism class
of $\Omega^1_{\cat{U}_{L}(n,d)}$-torsors over $\cat{U}_{L}(n,d)$ is given by a cohomology class in $
\coh{1}{\cat{U}_L(n,d)}{\Omega^1_{\cat{U}_L(n,d)}}$.
 
Let $\alpha, \beta \in \coh{1}{\cat{U}_L(n,d)}{\Omega^1_{\cat{U}_L(n,d)}}$ be the cohomology class 
corresponding to $\cat{C}(\Theta)$ and $\cat{M}'_{lc}{(n,L)}$  respectively. Since the $\dim[\C]{\coh{1}{\cat{U}_L(n,d)}{\Omega^1_{\cat{U}_L(n,d)}}} = 1$, there
exists $c \in \C$ such that $\beta = c~\alpha$.
Thus, $\cat{C}(\Theta)$ and $\cat{M}'_{lc}{(n,L)}$ are
isomorphic as a fibre bundle over  $\cat{U}_L(n,d)$.
Now, to complete the proof, 
it is sufficient to show that $\alpha \neq 0$ and $\beta \neq 0$.  $\Theta$  being an ample line bundle, its
 first Chern class $c_1(\Theta) \neq 0$  and $\alpha = c_1(\Theta)$. From \cite{BR2}, Theorem 2.11, we conclude
 that $\beta \neq 0$.
 
\end{proof}

Let $\alpha_j \in \Q$, for $j = 1, \ldots, m$,  such 
that $n \alpha_j \in \Z$ and $d + n \sum_{j=1}^m 
\alpha_j = 0$.  Fix a holomorphic line bundle $L$ of
degree $d$, and fix a logarithmic connection $D'_L$ on
$L$ singular over $S$ with residues $Res(D'_L,x_j) = n
\alpha_j $ for $j = 1, \ldots, m$.

Let $\cat{V}_{lc}(n,L)$ denote the moduli 
space parametrising all pairs $(E,D)$ such that

\begin{enumerate}
\item $E$ is a holomorphic vector bundle of rank $n$ over $X$
with $\bigwedge^{n}E \cong L $.
\item $D$ is a logarithmic connection on $E$ singular over $S$
with $Res(D,x_i) = \alpha_i \id{E(x_i)}$, for every $i = 1, \ldots, m$.
\item the logarithmic connection on $\bigwedge^{n}E$ induced by
$D$ coincides with the given logarithmic connection $D'_{L}$ on $L$.
\end{enumerate}
Let $\cat{V}'_{lc}(n,L)$ denote the subset of 
$\cat{V}_{lc}(n,L)$ whose underlying vector bundle is
stable.

From Proposition \ref{lem:3}, we have 
\begin{corollary}
\label{cor:1.1}
There is an isomorphism between $\cat{M}'_{lc}(n,L)$
and $\cat{V}'_{lc}(n,L)$. 
\end{corollary}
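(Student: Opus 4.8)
The plan is to exhibit a natural bijective correspondence $(E,D) \mapsto (E,D)$ between the two moduli functors and check that it is an isomorphism of varieties by combining the torsor description already established. First I would observe that the only difference between $\cat{M}'_{lc}(n,L)$ and $\cat{V}'_{lc}(n,L)$ is bookkeeping: in the former the pairs $(E,D)$ are constrained by $\bigwedge^n E \cong L$ with the induced connection agreeing with $D_L$, and the scalar residues $\lambda_j$ satisfy $n\lambda_j \in \Z$ and $d + n\sum_j \lambda_j = 0$; in the latter the scalars are called $\alpha_j$, subject to the identical constraints $n\alpha_j \in \Z$ and $d + n\sum_j \alpha_j = 0$, and the induced connection on $\bigwedge^n E$ is prescribed to be $D'_L$. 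Thus one is free to \emph{identify} $\lambda_j$ with $\alpha_j$ and $(L, D_L)$ with $(L, D'_L)$; after this identification the two sets of pairs coincide literally, and the condition ``$(\bigwedge^n E, \tilde D) \cong (L, D_L)$'' used to define $\cat{M}'_{lc}(n,L)$ is the same as conditions (1)--(3) defining $\cat{V}'_{lc}(n,L)$.

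The cleaner route, and the one I would actually write, is to go through Proposition \ref{lem:3}. That proposition produces an isomorphism $f : \cat{C}(\Theta) \xrightarrow{\sim} \cat{M}'_{lc}(n,L)$ over $\cat{U}_L(n,d)$, and its proof only used that $\cat{M}'_{lc}(n,L)$ is a nonzero $\Omega^1_{\cat{U}_L(n,d)}$-torsor, that $\coh{1}{\cat{U}_L(n,d)}{\Omega^1_{\cat{U}_L(n,d)}}$ is one-dimensional, and the nonvanishing input from \cite{BR2}, Theorem 2.11. Running the identical argument with $\cat{V}'_{lc}(n,L)$ in place of $\cat{M}'_{lc}(n,L)$: by the same reasoning as in Proposition \ref{prop:7}, the forgetful map $\cat{V}'_{lc}(n,L) \to \cat{U}_L(n,d)$ makes $\cat{V}'_{lc}(n,L)$ an $\Omega^1_{\cat{U}_L(n,d)}$-torsor (the fibre over $E$ is an affine space modelled on $\coh{0}{X}{\Omega^1_X \otimes \ad E}$, since fixing the induced connection on $\bigwedge^n E$ forces the trace-free condition), and by \cite{BR2}, Theorem 2.11, its class $\beta' \in \coh{1}{\cat{U}_L(n,d)}{\Omega^1_{\cat{U}_L(n,d)}}$ is nonzero. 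Since that cohomology group is one-dimensional, $\beta'$ and the class $\beta$ of $\cat{M}'_{lc}(n,L)$ differ by a nonzero scalar, so the two torsors are isomorphic as fibre bundles over $\cat{U}_L(n,d)$; this isomorphism is the desired isomorphism $\cat{M}'_{lc}(n,L) \cong \cat{V}'_{lc}(n,L)$ of algebraic varieties.

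The only point requiring a word of care — and the one I would flag as the main obstacle — is making sure the scalar ambiguity in the torsor isomorphism is harmless, i.e. that an isomorphism of $\Omega^1_{\cat{U}_L(n,d)}$-torsors (as opposed to a translation-compatible map) is genuinely what is wanted, and that no additional discrete data (such as the choice of isomorphism $\bigwedge^n E \cong L$, which is unique up to $\C^\times$) is being lost or double-counted. Since $\cat{U}_L(n,d)$ itself already fixes $\bigwedge^n E \cong L$ as part of its moduli problem, this discrete datum is common to both sides and causes no discrepancy. Hence the construction via Proposition \ref{lem:3} goes through verbatim and yields the claimed isomorphism, completing the proof.
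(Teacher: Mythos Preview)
Your second approach---showing that $\cat{V}'_{lc}(n,L)$ is an $\Omega^1_{\cat{U}_L(n,d)}$-torsor with nonzero class and hence, by the one-dimensionality of $\coh{1}{\cat{U}_L(n,d)}{\Omega^1_{\cat{U}_L(n,d)}}$, isomorphic to $\cat{C}(\Theta)$---is exactly the paper's argument: the paper simply says that by Proposition~\ref{lem:3} both varieties are isomorphic to $\cat{C}(\Theta)$.

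One correction to your first paragraph, though: you are \emph{not} free to identify $\lambda_j$ with $\alpha_j$ or $D_L$ with $D'_L$. These are genuinely different fixed data---the $\lambda_j$ were chosen once and for all earlier in the paper, and the $\alpha_j$ are an independent choice introduced just before the definition of $\cat{V}_{lc}(n,L)$. If they coincided there would be nothing to prove; the content of the corollary is precisely that the isomorphism type of the moduli space is insensitive to which residues (and which determinant connection) one fixes. Your torsor argument handles this correctly, but the ``bookkeeping'' framing in the first paragraph misreads what is being asserted.
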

\begin{proof}
From above Proposition $\ref{lem:3}$ both the 
varieties are isomorphic to $\cat{C}(\Theta)$.
\end{proof}

\begin{corollary}
\label{cor:1.2}
$\cat{M}_{lc}(n,L)$ and $\cat{V}_{lc}(n,L)$ are  
birationally equivalent.
\end{corollary}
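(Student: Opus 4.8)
The plan is to exploit the fact that the difference between $\cat{M}_{lc}(n,L)$ and $\cat{M}'_{lc}(n,L)$ (respectively between $\cat{V}_{lc}(n,L)$ and $\cat{V}'_{lc}(n,L)$) is confined to a proper closed subset, so that a birational equivalence can be read off from an isomorphism of the stable loci. First I would recall, from Section \ref{Mod-log-conn}, that $\cat{M}'_{lc}(n,L) = \cat{M}_{lc}(n,L) \cap \cat{M}'_{lc}(n,d)$ is a Zariski open dense subset of $\cat{M}_{lc}(n,L)$; the same argument, applied with the residues $\alpha_j$ in place of $\lambda_j$ and invoking [\cite{M}, Theorem 2.8(A)] for the openness of the stable locus, shows that $\cat{V}'_{lc}(n,L)$ is a Zariski open dense subset of $\cat{V}_{lc}(n,L)$. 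Here one uses that $n$ and $d$ are coprime, so by Proposition \ref{pro:2} every such logarithmic connection is irreducible, whence the relevant moduli spaces are irreducible (by the technique of \cite{S2}, Theorem 11.1) and their stable loci are therefore nonempty, open and dense.

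Next I would combine this with Corollary \ref{cor:1.1}: there is an isomorphism of algebraic varieties $\cat{M}'_{lc}(n,L) \cong \cat{V}'_{lc}(n,L)$, both sides being identified with $\cat{C}(\Theta)$ via Proposition \ref{lem:3}. Thus we have a common Zariski open dense subset $W \cong \cat{M}'_{lc}(n,L) \cong \cat{V}'_{lc}(n,L)$ sitting inside both $\cat{M}_{lc}(n,L)$ and $\cat{V}_{lc}(n,L)$. A birational equivalence between two irreducible varieties is precisely the data of an isomorphism between dense open subsets of each, so the pair of open immersions $W \hookrightarrow \cat{M}_{lc}(n,L)$ and $W \hookrightarrow \cat{V}_{lc}(n,L)$ exhibits the desired birational equivalence. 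Equivalently, on function fields one gets $\C(\cat{M}_{lc}(n,L)) \cong \C(W) \cong \C(\cat{V}_{lc}(n,L))$.

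The only genuine point requiring care — and the step I expect to be the main (mild) obstacle — is verifying that $\cat{V}'_{lc}(n,L)$ is in fact dense in $\cat{V}_{lc}(n,L)$, i.e.\ that $\cat{V}_{lc}(n,L)$ is irreducible and that its stable locus is nonempty. Irreducibility follows as indicated above once one knows the moduli space of logarithmic connections with these fixed central residues is irreducible; nonemptiness of the stable locus is where one must be slightly attentive, since a priori not every bundle underlying a pair in $\cat{V}_{lc}(n,L)$ need be stable. However, because $\gcd(n,d)=1$, the generic vector bundle of rank $n$ and degree $d$ with fixed determinant $L$ is stable, and by [\cite{B}, Proposition 1.2] such a bundle admits a logarithmic connection with the prescribed residues and prescribed induced connection on the determinant; hence $\cat{V}'_{lc}(n,L)$ is a nonempty open subset of the irreducible variety $\cat{V}_{lc}(n,L)$, and is therefore dense. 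This completes the argument.
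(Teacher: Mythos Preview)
Your proposal is correct and follows essentially the same approach as the paper: use that $\cat{M}'_{lc}(n,L)$ and $\cat{V}'_{lc}(n,L)$ are Zariski open dense in the irreducible varieties $\cat{M}_{lc}(n,L)$ and $\cat{V}_{lc}(n,L)$, then invoke Corollary~\ref{cor:1.1} to conclude birational equivalence. Your write-up is in fact more careful than the paper's, which simply asserts irreducibility and density without further comment; the extra justification you give for nonemptiness and density of $\cat{V}'_{lc}(n,L)$ is a welcome addition but not a different strategy.
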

\begin{proof}
Since $\cat{M}_{lc}(n,L)$ and $\cat{V}_{lc}(n,L)$ being
irreducible quasi-projective varieties over $\C$, and
$\cat{M}'_{lc}(n,L)$ and $\cat{V}'_{lc}(n,L)$ are
dense open subset of $\cat{M}_{lc}(n,L)$ and $\cat{V}_{lc}(n,L)$, respectively. From Corollary \ref{cor:1.1}, we are done.

\end{proof}

We will show that  $\cat{M}'_{lc}{(n,L)}$ does not admit
any algebraic function. In view of Proposition \ref{lem:3}, 
it is enough to show that $\cat{C}(\Theta)$ does not have 
any non constant algebraic function. The proof of the Theorem \ref{thm:1.3}
is very similar to the proof of the Theorem $4.3$ in \cite{BR}.

\begin{proof}[\bf Proof of Theorem \ref{thm:1.3}]
\label{proof_thm_1.3}
Let $\At{\Theta}$ be the Atiyah bundle over 
$\cat{U}_{L}(n,d)$ associated to ample line bundle $\Theta$ as described in \eqref{eq:1.2}, and
$\p(\At{\Theta})$ be the projectivization of 
$\At{\Theta}$, that is, $\p(\At{\Theta})$ parametrises hyperplanes in $\At{\Theta}$.
Let $\p({T \cat{U}_{L}})$ be the projectivization of the tangent bundle $T\cat{U}_{L}(n,d)$. 
Notice that $\p(T\cat{U}_{L})$ is a subvariety of 
$\p (\At{\Theta})$, and  $\p(T\cat{U}_{L})$ is the zero
locus of the of a section of the tautological line bundle
$\struct{\p (\At{\Theta})}(1)$. Now, observe that
$\cat{C}(\Theta) = \p(\At{\Theta}) \setminus \p(T \cat{U}_{L})$. Then we have

\begin{equation}
\label{eq:27}
\coh{0}{\cat{C}(\Theta)}{\struct{\cat{C}(\Theta)}} =
\varinjlim_{k} \coh{0}{\p \At{\Theta}}{\struct{\p \At{\Theta}}(k)} = \varinjlim_{k} \coh{0}{\cat{U}_{L}(n,d)}{\cat{S}^k\At{\Theta}}
\end{equation}

where $\cat{S}^k \At{\Theta}$ denotes the $k$-th symmetric powers of $\At{\Theta}$.
Consider the symbol operator 
\begin{equation}
\label{eq:28}
\sigma:\At{\Theta} \to   T\cat{U}_{L}(n,d)
\end{equation}
given in  \eqref{eq:1.2}.
This induces a morphism 
\begin{equation}
\label{eq:29}
\cat{S}^k(\sigma): \cat{S}^k \At{\Theta} \to 
\cat{S}^k T \cat{U}_{L}(n,d) 
\end{equation}
of $k$-th symmetric powers.
Now, because of the following composition 
\begin{equation*}
\cat{S}^{k-1} \At{\Theta} = \struct{\cat{U}_{L}(n,d)} \otimes \cat{S}^{k-1} \At{\Theta} \hookrightarrow 
\At{\Theta} \otimes \cat{S}^{k-1} \At{\Theta} \to 
\cat{S}^{k} \At{\Theta},
\end{equation*}
we have 
\begin{equation}
\label{eq:30}
\cat{S}^{k-1} \At{\Theta} \subset \cat{S}^k \At{\Theta}
~~~ \mbox{for all}~ k \geq 1.
\end{equation}

Thus, we get a short exact sequence of vector bundles over
$\cat{U}_{L}(n,d)$,
\begin{equation}
\label{eq:31}
0 \to \cat{S}^{k-1} \At{\Theta} \to \cat{S}^{k} \At{\Theta} \xrightarrow{\cat{S}^k (\sigma)} \cat{S}^k T \cat{U}_{L}(n,d) \to 0.
\end{equation}
 
 In other words, we get a filtration 
 \begin{equation}
 \label{eq:32}
 0 \subset \cat{S}^0 \At{\Theta} \subset \cat{S}^1 \At{\Theta} \subset \ldots \subset \cat{S}^{k-1} \At{\Theta} \subset \cat{S}^k \At{\Theta} \subset \ldots
 \end{equation}

 such that 
 \begin{equation}
 \label{eq:33}
 \cat{S}^{k} \At{\Theta} / \cat{S}^{k-1} \At{\Theta} \cong \cat{S}^k T \cat{U}_{L}(n,d)~~~ \mbox{for all}~ k \geq 1.
 \end{equation}
 
 Above filtration in \eqref{eq:32} gives following 
 increasing chain of $\C$-vector spaces
 
 \begin{equation}
 \label{eq:34}
 \coh{0}{\cat{U}_{L}(n,d)}{\struct{\cat{U}_{L}(n,d)}} \subset \coh{0}{\cat{U}_{L}(n,d)}{\cat{S}^1\At{\Theta}}
 \subset \ldots
 \end{equation}
 
 To prove \eqref{eq:C}, it is enough to show that 
\begin{equation}
\label{eq:35}
\coh{0}{\cat{U}_{L}(n,d)}{\cat{S}^{k-1}\At{\Theta}}
\cong \coh{0}{\cat{U}_{L}(n,d)}{\cat{S}^k\At{\Theta}}
~~~ \mbox{for all}~ k \geq 1.
\end{equation}

Since,
\begin{equation*}
\label{eq:36}
\frac{\cat{S}^k \At{\Theta}}{ \cat{S}^{k-2} \At{\Theta}}
\cong \frac{\cat{S}^k T \cat{U}_{L}(n,d)}{\cat{S}^{k-1} 
T \cat{U}_{L}(n,d)},
\end{equation*}
we have following commutative diagram
\begin{equation}
\label{eq:cd1}
\xymatrix{
0 \ar[r] & \cat{S}^{k-1} \At{\Theta} \ar[d] \ar[r] & \cat{S}
^k \At{\Theta} 
\ar[d] \ar[r]^{\cat{S}^k(\sigma)} & \cat{S}^k T \cat{U}
_{L}(n,d) \ar[d] \ar[r] & 0 \\
0 \ar[r] & \cat{S}^{k-1} T \cat{U}_{L}(n,d) \ar[r] & 
\frac{\cat{S}^k \At{\Theta}}{\cat{S}^{k-2} \At{\Theta}} \ar[r] & \cat{S}^k T \cat{U}_{L}
(n,d) \ar[r] & 0 
}
\end{equation}

which gives rise to a following commutative 
diagram of long exact sequences
\begin{equation}
\label{eq:cd2}
\xymatrix{
\cdots \ar[r] & \coh{0}{\cat{U}_{L}(n,d)}{\cat{S}^k T 
\cat{U}_{L}(n,d)} \ar[d] \ar[r]^{\delta'_k} & \coh{1}
{\cat{U}_{L}(n,d)}{\cat{S}^{k-1}\At{\Theta}} \ar[d] 
\ar[r] & \cdots  \\
\cdots \ar[r] & \coh{0}{\cat{U}_{L}(n,d)}{\cat{S}^k T 
\cat{U}_{L}(n,d)}       \ar[r]^{\delta_k} & \coh{1}
{\cat{U}_{L}(n,d)}{\cat{S}^{k-1} T \cat{U}_{L}(n,d)} 
\ar[r] & \cdots }
\end{equation}
To show \eqref{eq:35}, it is enough to prove that 
the boundary operator $\delta'_k$ is injective for all $k
\geq 1$, which is equivalent to showing that 
the boundary operator 
\begin{equation}
\delta_k: \coh{0}{\cat{U}_{L}(n,d)}{\cat{S}^k T \cat{U}
_{L}(n,d)}      \to  \coh{1}{\cat{U}_{L}(n,d)}
{\cat{S}^{k-1} T \cat{U}_{L}(n,d)}
\end{equation}
is injective for every $k \geq 1$.

Now, we will describe $\delta_k$ using the first Chern
class $c_1(\Theta) \in \coh{1}{\cat{U}_{L}(n,d)}
{ T^* \cat{U}_{L}(n,d)}$ of the ample line bundle $\Theta$ over $\cat{U}_{L}(n,d)$ .The cup product with $ kc_1(\Theta)$ gives rise to a homomorphism
\begin{equation}
\label{eq:37}
\mu: \coh{0}{\cat{U}_{L}(n,d)}{\cat{S}^k T \cat{U}
_{L}(n,d)} \to \coh{1}{\cat{U}_{L}(n,d)}{\cat{S}^k T \cat{U}
_{L}(n,d) \otimes T^* \cat{U}_{L}(n,d)}
\end{equation}
Also, we have a canonical homomorphism of vector bundles
\begin{equation*}
\label{eq:38}
\beta:\cat{S}^k T\cat{U}_{L}(n,d) \otimes T^*\cat{U}_{L}(n,d) \to  \cat{S}^{k-1}T \cat{U}_{L}(n,d)
\end{equation*}
which induces a morphism of  \C-vector spaces
\begin{equation}
\label{eq:39}
\beta^*:\coh{1}{\cat{U}_{L}(n,d)}{\cat{S}^k T \cat{U}
_{L}(n,d) \otimes T^* \cat{U}_{L}(n,d)} \to \coh{1}
{\cat{U}_{L}(n,d)}{\cat{S}^{k-1} T \cat{U}
_{L}(n,d)}.
\end{equation}
So, we get a morphism
\begin{equation}
\label{eq:40}
\tilde{\mu} = \beta^* \circ \mu: \coh{0}{\cat{U}_{L}(n,d)}{\cat{S}^k T 
\cat{U}_{L}(n,d)} \to \coh{1}{\cat{U}_{L}(n,d)}
{\cat{S}^{k-1} T \cat{U}_{L}(n,d)}.
\end{equation}
Then $\tilde{\mu} = \delta_k$. 
It is sufficient to show that $\tilde{\mu}$ is injective.

Moreover, we have natural projection
\begin{equation}
\label{eq:41}
\eta: T^* \cat{U}_{L}(n,d) \to \cat{U}_{L}(n,d)
\end{equation}
and 
\begin{equation}
\label{eq:42}
\eta_* \eta^* \struct{\cat{U}_{L}(n,d)} = \oplus_{k \geq 0} \cat{S}^k T \cat{U}_{L}(n,d).
\end{equation}
Thus, we have 
\begin{equation}
\label{eq:43}
\coh{j}{T^*\cat{U}_{L}(n,d)}
{\struct{ T^* \cat{U}_{L}(n,d)}} = \oplus_{k \geq 0}
\coh{j}{\cat{U}_{L}(n,d)}
{\cat{S}^{k} T \cat{U}_{L}(n,d)} ~~~ \mbox{for all}~ j \geq 0.
\end{equation}

Now, we use Hitchin fibration to compute $\coh{j}{T^*\cat{U}_{L}(n,d)}{\struct{ T^* \cat{U}_{L}(n,d)}}$.
Let 
\begin{equation}
\label{eq:44}
h: T^* \cat{U}_{L}(n,d) \to B_n = \oplus_{i =2}^{n}\coh{0}{X}
{K_{X}^{i}}
\end{equation}
be the Hitchin map defined by sending a pair $(E, \phi)$
to $ \sum_{i =1}^{n} trace(\phi^i)$. Notice that the base
of the Hithcin map $h$ in \eqref{eq:44} is a vector space
over $\C$ of dimension $n^2(g-1)+1$.

Let $b \in B_n$. Then $h^{-1}(b) = A \setminus F$, where
$A$ is some abelian variety and $F$ is a subvariety of 
$A$ with $\mbox{codim}(F,A) \geq 3$ (for more details see \cite{BNR}, \cite{H}), and we will be using this fact 
showing that $\tilde{\mu}$ is injective.

Let $g: T^*\cat{U}_{L}(n,d) \to \C$ be an algebaric function. Then its restriction  $g|_{h^{-1}(b)}: h^{-1}(b) \to \C$ to $h^{-1}(b)$ for every 
$b \in B_n$ is an algebraic function. Since $\mbox{codim(A,F)} \geq 3$, $g|_h^{-1}(b)$ extended to 
a unique algebraic function $\tilde{g}:A \to \C$.
$A$ being an abelian variety, $\tilde{g}$ is a constant 
function. Thus, on each fibre $h^{-1}(b)$, $g$ is constant, and hence gives an algebraic function on
$B_n$.  

Set $\cat{B} = \mbox{d}( \coh{0}{B_n}
{\struct{B_n}}) \subset  \coh{0}{B_n}
{\Omega^1_{B_n}}$ the space of all exact algebraic $1$-form.
Define a map 
\begin{equation}
\label{eq:45}
\theta: \coh{0}{T^*\cat{U}_{L}(n,d)}
{\struct{ T^* \cat{U}_{L}(n,d)}} \to \cat{B}
\end{equation}
by $t \mapsto dg$, where $g$ is the function which is defined by descent of $t$.  Then $\theta$ is an isomorphism.

From \eqref{eq:43} and \eqref{eq:45}, we have
\begin{equation}
\label{eq:46}
\theta:\oplus_{k \geq 0}
\coh{0}{\cat{U}_{L}(n,d)}
{\cat{S}^{k} T \cat{U}_{L}(n,d)} \to \cat{B}
\end{equation}
which is an isomorphism.

Let $T_h = T_{T^* \cat{U}_{L}(n,d) / B_n} = \SKer{dh}$
be the relative tangent sheaf on $T^*\cat{U}_{L}(n,d)$,
where $dh: T(T^*\cat{U}_{L}(n,d)) \to h^*TB_n$ morphism 
of bundles.

 Note that 
$ \coh{0}{B_n}{\Omega^1_{B_n}} \subset  \coh{0}{T^*\cat{U}_{L}(n,d)}
{T_h} $, and hence from \eqref{eq:46}, we have
an injective homomorphism
\begin{equation}
\label{eq:47}
\nu: \cat{B} = \oplus_{k \geq 0} \theta(
\coh{0}{\cat{U}_{L}(n,d)}
{\cat{S}^{k} T \cat{U}_{L}(n,d)}) \to \coh{0}{T^*\cat{U}_{L}(n,d)}{T_h}.
\end{equation}

Consider the morphism 

$\coh{0}{T^*\cat{U}_{L}(n,d)}{T_h} \to \coh{1}{T^*\cat{U}_{L}(n,d)}{T_h \otimes T^* T^* \cat{U}_{L}(n,d)}$ defined 
by taking cup product with the first Chern class $c_1(\eta^* \Theta) \in  \coh{1}{T^*\cat{U}_{L}(n,d)}{ T^* T^* \cat{U}_{L}(n,d)}$.

Using the pairing $T_h \otimes T^* T^* \cat{U}_{L}(n,d) \to \struct{T^* \cat{U}_{L}(n,d)}$, we get a homomorphism
\begin{equation}
\label{eq:48}
\psi: \coh{0}{T^*\cat{U}_{L}(n,d)}{T_h} \to \coh{1}{T^*\cat{U}_{L}(n,d)}{\struct{T^*\cat{U}_{L}(n,d)}}
\end{equation}

Since $c_1(\eta^* \Theta) = \eta^*(c_1 \Theta)$, we have
\begin{equation}
\label{eq:49}
k \psi \circ \nu \circ \theta(\omega_k) = \tilde{\mu}(\omega_k),
\end{equation}
for all $\omega_k \in \coh{0}{\cat{U}_{L}(n,d)}
{\cat{S}^{k} T \cat{U}_{L}(n,d)})$.
Since $\nu$ and $\theta$ are injective homomorphisms, it is enough to 
show that $\psi|_{\nu(\cat{B})}$ is injective homomorphism. 
Let $\omega \in \cat{B} \setminus \{0\}$ be a non-zero
exact $1$-form. Choose $b \in B_n$ such that $\omega(b) \neq 0$. As previously discussed $h^{-1}(b) = A \setminus F$, where $A$ is an abelian variety and $F$ is a subvariety of $A$ such that $\mbox{codim}(F,A) \geq 3$.
Now, $\psi(\nu(\omega)) \in \coh{1}{T^*\cat{U}_{L}(n,d)}{\struct{T^*\cat{U}_{L}(n,d)}}$ and we have restriction map $\coh{1}{T^*\cat{U}_{L}(n,d)}{\struct{T^*\cat{U}_{L}(n,d)}} \to \coh{1}{h^{-1}(b)}{\struct{h^{-1}(b)}}$.
Since $ \omega(b) \neq 0$, $\psi(\nu(\omega)) \in \coh{1}{h^{-1}(b)}{\struct{h^{-1}(b)}}$.
Because of the following isomorphisms
\begin{equation*}
\label{eq:50}
 \coh{1}{h^{-1}(b)}{\struct{h^{-1}(b)}} \cong  \coh{1}{A}{\struct{A}} \cong  \coh{0}{A}{TA},
\end{equation*}
it follows that $\psi(\nu(\omega)) \neq 0$.
This completes the proof.

\end{proof}

Since $\cat{M}'_{lc}(n,L)$ is a open dense subset of 
$\cat{M}_{lc}(n,L)$, we have following
\begin{corollary}
\label{cor:2}
$\coh{0}{\cat{M}_{lc}(n,L)}{\struct{\cat{M}_{lc}(n,L)}} =\C.$
\end{corollary}

Now, for the pair $(L_0,D_{L_0})$ where $L_0 = \otimes_{i = 1}^{m}
\struct{X}(-n \lambda_i x_i) $ and $D_{L_0}$ the 
logarithmic connection defined by the de Rham 
differential as described in section \eqref{Mod-log-conn}, consider the moduli space $\cat{M}_{lc}(n,L_0)$.
We show that the moduli space $\cat{M}_{lc}(n,L_0)$ 
admits non-constant holomorphic functions. Consider
the Betti moduli space $\cat{B}_g$ described in section 
\eqref{Mod-log-conn}, which is an affine variety.

Let $\gamma_j \in \pi_1(X_0,x_0)$. Define a function $f_{jk}: \cat{B}_g \to \C$ by $\rho 
\mapsto \text{trace}(\rho(\gamma_j)^k)$ for $k \in \N$.
Then $f_{jk}$ are non-constant algebraic functions on $
\cat{B}_g$ for $j= 1, \ldots, m $ and $k \in \N$.
Thus $\cat{M}_{lc}(n,L_0)$ is not isomorphic to $B_g$ as 
algebraic varieties.

 Since $\cat{M}_{lc}(n,L_0)$ is biholomorphic 
to $\cat{B}_g$, $f_{jk} \circ \Phi: \cat{M}_{lc}(n,L_0) 
\to \C$ are non-constant holomorphic  functions for all
$j =1, \ldots, m$ and $k \in \N$.

\section{The Moduli space of logarithmic connection with 
arbitrary residues}
\label{Mod-log-conn-arb}
Let $X$ be a compact Riemann surface of genus($g$) $\geq 
3$ and $S = \{x_1, \ldots, x_m\}$ be a subset of distinct points  of $X$ as in section \eqref{Pre}.
By a pair $(E,D)$ over $X$, we mean that 
\begin{enumerate}
\item $E$ is a holomorphic vector bundle over $X$ of degree $d$ and rank $n$.
\item $n$ and $d$ are mutually coprime.
\item $D$ is a logarithmic connection in $E$ singular 
over $S$. 

\end{enumerate}
We call such a pair $(E,D)$ logarithmic connection 
on $X$ singular over $S$.

Now, given such a pair $(E,D)$,  from \cite{O}, Theorem 3, we have
\begin{equation}
\label{eq:51}
d + \sum_{j=1}^m \tr{Res(D,x_j)} = 0,
\end{equation}
where  $Res(D,x_j) \in \ENd{E(x_j)}$, for all $j =1, \ldots, m$.

Let $\cat{N}_{lc}(n,d)$ be the moduli space which 
parametrises isomorphism class of pairs $(E,D)$.
Then $\cat{N}_{lc}(n,d)$ is a separated quasi-projective
scheme over $\C$ (see \cite{N}).
Let $\cat{N}'_{lc}(n,d)$ be a subset of $\cat{N}_{lc}
(n,d)$, whose underlying vector bundle is stable.
Let $(E,D)$ and $(E,D')$ be two points in $\cat{N}'_{lc}(n,d)$. Then 
\begin{equation}
\label{eq:52}
D-D' \in \coh{0}{X}{\ENd{E} \otimes \Omega^1_X(\log S)}.
\end{equation}
Next, for $\theta \in \coh{0}{X}{\ENd{E} \otimes \Omega^1_X(\log S)}$, we have $(E, D+ \theta) \in \cat{N}'_{lc}(n,d)$.
Notice the difference between the affine spaces when residue is fixed and otherwise.
Thus, the space of all logarithmic connections $D$ on a
given stable vector bundle $E$ singular over $S$, is an
affine space modelled over $\coh{0}{X}{\ENd{E} \otimes \Omega^1_X(\log S)}$.
Let 
\begin{equation}
\label{eq:53}
q: \cat{N}'_{lc}(n,d) \to \cat{U}(n,d)
\end{equation}
be the natural projection defined by sending $(E,D)$ to 
$E$. 
Given $E \in \cat{U}(n,d)$. Choose a set of complex numbers $\alpha_1,\ldots, \alpha_m$ which satisfies the
following equation
\begin{equation}
\label{eq:53.1}
d+ n \sum_{j =1}^m \alpha_j = 0.
\end{equation}
Since $E$ is stable, from \cite{B}, Proposition $1.2$,
$E$ admits a logarithmic connection $D$ singular over 
$S$. Thus, $q$ is a surjective map,
 and dimension of each
fibre $q^{-1}(E)$ is $n^2(g-1+m)$.
We have following result very similar to the
Theorem \ref{thm:1.1}.

\begin{theorem}
\label{thm:4}
There exists an algebraic vector bundle $\tilde{\pi}: \tilde{ \Xi} \to \cat{U}(n,d)$ such that 
$\cat{N}'_{lc}(n,d)$ is embedded in $\p (\tilde{\Xi})$ with $\p (\tilde{\Xi}) \setminus \cat{N}'_{lc}(n,d)$ as the
hyperplane at infinity.
\end{theorem}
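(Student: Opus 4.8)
The plan is to mimic the proof of Theorem \ref{thm:1.1} essentially verbatim, replacing the role of the torsor $p\colon\cat{M}'_{lc}(n,d)\to\cat{U}(n,d)$ with the map $q\colon\cat{N}'_{lc}(n,d)\to\cat{U}(n,d)$ of \eqref{eq:53}. The crucial structural fact I need is that $q$ is a fibre bundle whose fibre $q^{-1}(E)$ over a stable bundle $E$ is an affine space. This follows from \eqref{eq:52} and the subsequent discussion: the space of logarithmic connections on a fixed $E$ singular over $S$ is an affine space modelled over the vector space $W(E) := \coh{0}{X}{\ENd{E}\otimes\Omega^1_X(\log S)}$, of dimension $n^2(g-1+m)$. (One should note that, unlike in the fixed-residue case, $q^{-1}(E)$ is modelled over $W(E)$ rather than over $\coh{0}{X}{\Omega^1_X\otimes\ENd{E}}$, because residues are now allowed to vary; this does not affect the argument.) So $\cat{N}'_{lc}(n,d)$ is an affine-space bundle — indeed an $\Omega$-torsor for the appropriate sheaf $\Omega$ whose fibre at $E$ is $W(E)$ — over $\cat{U}(n,d)$.

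Granting this, I would first form the fibrewise affine dual: for each $E\in\cat{U}(n,d)$ set $q^{-1}(E)^{\vee}$ to be the $\C$-vector space of affine-linear maps $q^{-1}(E)\to\C$. These assemble into an algebraic vector bundle $\tilde\pi\colon\tilde\Xi\to\cat{U}(n,d)$, defined exactly as in the proof of Theorem \ref{thm:1.1}: a section of $\tilde\Xi$ over a Zariski open $U\subset\cat{U}(n,d)$ is an algebraic function $f\colon q^{-1}(U)\to\C$ restricting to an affine-linear map on each fibre. Its fibre at $E$ is $q^{-1}(E)^{\vee}$, which has dimension $n^2(g-1+m)+1$. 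Next, for $(E,D)\in\cat{N}'_{lc}(n,d)$ the evaluation map $\Phi_{(E,D)}\colon q^{-1}(E)^{\vee}\to\C$, $\varphi\mapsto\varphi[(E,D)]$, is a nonzero linear functional whose kernel is a hyperplane $H_{(E,D)}\subset q^{-1}(E)^{\vee}$. This defines a map $\iota\colon\cat{N}'_{lc}(n,d)\to\p(\tilde\Xi)$ sending $(E,D)$ to the class of $H_{(E,D)}$, where $\p(\tilde\Xi)$ is the projective bundle of hyperplanes in the fibres of $\tilde\Xi$. One checks $\iota$ is an open embedding (it identifies $q^{-1}(E)$ with the complement in $\p(q^{-1}(E)^{\vee})$ of the single hyperplane of functionals vanishing on the "constant $1$" direction, i.e. the classes of $H$'s containing that line), and that $Y := \p(\tilde\Xi)\setminus\cat{N}'_{lc}(n,d)$ meets each fibre $\tilde\pi^{-1}(E)$ in a projective hyperplane, so $Y$ is the hyperplane at infinity.

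The only point requiring a little care — and the place I expect the main (minor) obstacle — is the verification that the fibrewise constructions glue to an \emph{algebraic} vector bundle, rather than just a family of vector spaces, and that $\iota$ and $Y$ behave uniformly in $E$; but since $q$ is itself an algebraic affine-space bundle over $\cat{U}(n,d)$ (locally trivial in the Zariski topology, being a torsor under a vector bundle), the affine-dual construction is the standard one and the gluing is automatic, exactly as in the proof of Theorem \ref{thm:1.1}. Hence the proof reduces to a reference to that argument. I would therefore write the proof as: establish that $q$ is a torsor under the vector bundle with fibre $W(E)=\coh{0}{X}{\ENd{E}\otimes\Omega^1_X(\log S)}$ over $\cat{U}(n,d)$ (citing \eqref{eq:52} and the surrounding discussion, together with \cite{B}, Proposition~1.2 for surjectivity), and then invoke verbatim the construction of $\Xi$, $\p(\Xi)$, $\iota$ and $Y$ from the proof of Theorem \ref{thm:1.1}, now producing $\tilde\Xi$, $\p(\tilde\Xi)$ and the hyperplane at infinity $\p(\tilde\Xi)\setminus\cat{N}'_{lc}(n,d)$.
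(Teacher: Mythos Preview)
Your proposal is correct and follows exactly the approach the paper takes: the paper's proof of Theorem~\ref{thm:4} is the single sentence ``Proof is very similar to the proof of Theorem~\ref{thm:1.1}'', and your write-up is precisely the unpacking of that reference, with the correct observation that the fibres of $q$ are now affine spaces modelled on $\coh{0}{X}{\ENd{E}\otimes\Omega^1_X(\log S)}$ rather than on $\coh{0}{X}{\Omega^1_X\otimes\ENd{E}}$.
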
 
\begin{proof}
Proof is very similar to the proof of Theorem \ref{thm:1.1}. 
\end{proof}

Next, the morphism $q$ defined in \eqref{eq:53} induces 
a homomorphism 
\begin{equation}
\label{eq:54}
q^*: \Pic{\cat{U}(n,d)} \to  \Pic{\cat{N}'_{lc}(n,d)}
\end{equation}
of Picard groups, that sends line bundle $\eta$ over 
$\cat{U}(n,d)$ to a line bundle $q^*\eta$ over $
\cat{N}'_{lc}(n,d)$ as described in subsection 
\eqref{Picard}.  Again, we record a result similar 
to the Theorem \ref{thm:1.2}.

\begin{theorem}
\label{thm:5}
The homomorphism $q^*: \Pic{\cat{U}(n,d)} \to \Pic{\cat{N}'_{lc}(n,d)}$ is an isomorphism of groups.
\end{theorem}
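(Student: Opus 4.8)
\textbf{Proof proposal for Theorem \ref{thm:5}.} The plan is to mimic the argument of Theorem \ref{thm:1.2} verbatim, replacing $\cat{M}'_{lc}(n,d)$ by $\cat{N}'_{lc}(n,d)$ and the bundle $\Xi$ by $\tilde{\Xi}$ from Theorem \ref{thm:4}. First I would prove injectivity of $q^*$. Suppose $\eta \to \cat{U}(n,d)$ is an algebraic line bundle with $q^*\eta$ trivial, and fix a nowhere vanishing global section $s$ of $q^*\eta$. For each $E \in \cat{U}(n,d)$ the restriction $s|_{q^{-1}(E)} : q^{-1}(E) \to \eta(E)$ is a nowhere vanishing algebraic function on the affine space $q^{-1}(E)$, which by the computation above $\eqref{eq:53.1}$ has dimension $n^2(g-1+m)$; since a nowhere vanishing regular function on $\mathbb{A}^N$ is constant, $s$ is constant along every fibre of $q$. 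Hence $s$ descends to a nowhere vanishing section of $\eta$ over $\cat{U}(n,d)$, so $\eta$ is trivial and $q^*$ is injective.

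For surjectivity I would use Theorem \ref{thm:4}: $\cat{N}'_{lc}(n,d)$ sits inside $\p(\tilde\Xi)$ as the complement of the hyperplane at infinity, so restriction gives a surjection $\Pic{\p(\tilde\Xi)} \to \Pic{\cat{N}'_{lc}(n,d)}$ (removing a divisor can only kill classes, never create them, on a smooth variety). By the standard description of the Picard group of a projectivized vector bundle over $\cat{U}(n,d)$, one has $\Pic{\p(\tilde\Xi)} \cong \tilde{\pi}^*\Pic{\cat{U}(n,d)} \oplus \Z\,\struct{\p(\tilde\Xi)}(1)$; the class $\struct{\p(\tilde\Xi)}(1)$ restricts (up to the twist by a pullback class coming from the affine trivialization of the complement) to the trivial bundle on $\cat{N}'_{lc}(n,d)$, because the hyperplane at infinity is linearly equivalent to $\struct{\p(\tilde\Xi)}(1)$ tensored with a pullback from the base. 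Therefore every class on $\cat{N}'_{lc}(n,d)$ comes from $\Pic{\cat{U}(n,d)}$ via $q^*$, giving surjectivity. Combining the two halves, $q^*$ is an isomorphism.

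The only genuinely new input compared with Theorem \ref{thm:1.2} is that the fibre dimension is now $n^2(g-1+m)$ rather than $n^2(g-1)+1$, but this changes nothing in the argument: all that is used is that the fibre is a positive-dimensional affine space, so that Hartogs-type rigidity of nowhere vanishing regular functions applies, and that the forgetful map $q$ is a torsor (hence a Zariski-locally trivial affine-space bundle) under $\coh{0}{X}{\ENd{E}\otimes\Omega^1_X(\log S)}$, which is exactly the content recorded around $\eqref{eq:52}$. I expect the main point requiring care to be the precise bookkeeping in the surjectivity step — namely verifying that the restriction of $\struct{\p(\tilde\Xi)}(1)$ to $\cat{N}'_{lc}(n,d)$ lies in the image of $q^*$, i.e. that the hyperplane-at-infinity divisor class and $\struct{\p(\tilde\Xi)}(1)$ differ by a pullback from $\cat{U}(n,d)$ — but this is identical to the corresponding point in the proof of Theorem \ref{thm:1.2} and follows from the same reasoning about projective bundles, so it can be dispatched by citing that argument.
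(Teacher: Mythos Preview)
Your proposal is correct and follows the paper's own strategy exactly: the paper simply states that the proof of Theorem \ref{thm:5} is ``very similar to the proof of Theorem \ref{thm:1.2}'' and gives no further details, and what you have written is precisely that argument with $p$, $\cat{M}'_{lc}(n,d)$, and $\Xi$ replaced by $q$, $\cat{N}'_{lc}(n,d)$, and $\tilde{\Xi}$. The only cosmetic remark is that the injectivity step uses nothing more than the elementary fact that $\struct{}(\mathbb{A}^N)^\times = \C^\times$ (no Hartogs-type extension is needed), and you need not invoke a torsor structure for $q$ --- the paper does not establish one in the arbitrary-residue case, and the argument only requires that each fibre $q^{-1}(E)$ is an affine space, which is what \eqref{eq:52} records.
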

\begin{proof} Proof is very similar to the proof of Theorem \ref{thm:1.2}.
\end{proof}

Now, fix a pair $(L,D_L)$, where $L$ is a holomorphic 
vector bundle of degree $d$ and $D_L$ is a fixed 
logarithmic connections on $L$ singular over $S$.
Let $\cat{N}_{lc}(n,L)$ denote the moduli space 
parametrising all pairs $(E,D)$ such that 
\begin{enumerate}
\item $E$ is a holomorphic vector bundle 
over $X$ of rank $n$ and degree $d$ with
 $\bigwedge^{n}E \cong L $, and $n$ and $d$ are mutually 
 coprime.
\item $D$ is a logarithmic connection in $E$ singular 
over $S$ with $Res(D,x_j)$ $\in$  $Z(\mathfrak{gl}(n,\C))$, and $\tr{Res(D,x_j)} \in \Z$,
where $Z(\mathfrak{gl}(n,\C))$ denotes the centre of 
$\mathfrak{gl}(n,\C)$.
\item the logarithmic connection on $\bigwedge^{n}E$ induced by
$D$ coincides with the given logarithmic connection $D_{L}$ on $L$.
\end{enumerate}
Then, Lemma \eqref{lem:1} holds for such a pair $(E,D)$, and by Proposition \ref{pro:2}, $(E,D)$ is irreducible.

Let $\cat{N}'_{lc}(n,L)$ be the subset of $\cat{N}_{lc}
(n,L)$ whose underlying vector bundle is stable.
Let 
\begin{equation}
\label{eq:55}
q_0: \cat{N}'_{lc}(n,L) \to \cat{U}_L(n,d)
\end{equation}
be the natural projection sending $(E,D)$ to $E$. 
Then, we have following results similar to Proposition \ref{prop:7.1} and Proposition \ref{prop:8}. 
\begin{proposition}
\label{prop:9}
There exists an algebraic vector bundle $\pi:\tilde{ \Xi}' \to \cat{U}_L(n,d)$ such that 
$\cat{N}'_{lc}(n,L)$ is embedded in $\p (\tilde{\Xi}')$ with $\p (\tilde{\Xi}') \setminus \cat{N}'_{lc}(n,L)$ as the
hyperplane at infinity.
\end{proposition}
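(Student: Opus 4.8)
The plan is to mimic the construction used for Theorem \ref{thm:1.1} verbatim, replacing the torsor structure over $\coh{0}{X}{\Omega^1_X \otimes \ENd{E}}$ by the one over $\coh{0}{X}{\ENd{E} \otimes \Omega^1_X(\log S)}$ appropriate to the arbitrary-residue setting, and then restricting to the fixed-determinant locus. First I would recall that for $E \in \cat{U}_L(n,d)$ the fibre $q_0^{-1}(E)$ is an affine space modelled over the subspace $\coh{0}{X}{\ad{E} \otimes \Omega^1_X(\log S)}$ of trace-free logarithmic endomorphism-valued $1$-forms (the trace-free condition coming from the fixed-determinant constraint that the induced connection on $\bigwedge^n E$ equals $D_L$), so that $q_0 \colon \cat{N}'_{lc}(n,L) \to \cat{U}_L(n,d)$ is a fibre bundle whose fibres are affine spaces of constant dimension. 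As in the proof of Theorem \ref{thm:1.1}, for each $E$ the affine dual $q_0^{-1}(E)^{\vee}$ of affine-linear functions $q_0^{-1}(E) \to \C$ is a genuine vector space, of dimension one more than the fibre dimension.

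Next I would assemble these duals into an algebraic vector bundle $\pi \colon \tilde{\Xi}' \to \cat{U}_L(n,d)$, characterised by the property that for every Zariski-open $U \subset \cat{U}_L(n,d)$ a section of $\tilde{\Xi}'$ over $U$ is an algebraic function $f \colon q_0^{-1}(U) \to \C$ whose restriction to each fibre $q_0^{-1}(E)$ is affine-linear; this is exactly the dual-of-a-torsor construction recalled in the Remark preceding the proof of Theorem \ref{thm:1.1}. Then for each $(E,D) \in \cat{N}'_{lc}(n,L)$ the evaluation map $\Phi_{(E,D)} \colon q_0^{-1}(E)^{\vee} \to \C$, $\varphi \mapsto \varphi[(E,D)]$, is a nonzero affine-linear functional on the vector space $\tilde{\Xi}'(E)$, so its kernel is a hyperplane $H_{(E,D)} \subset \tilde{\Xi}'(E)$. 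Sending $(E,D)$ to the point of $\p(\tilde{\Xi}')$ represented by $H_{(E,D)}$ gives a map $\iota \colon \cat{N}'_{lc}(n,L) \to \p(\tilde{\Xi}')$; I would check it is an open embedding exactly as in Theorem \ref{thm:1.1}, using that the assignment $D \mapsto H_{(E,D)}$ identifies the affine space $q_0^{-1}(E)$ with the complement of a single hyperplane in $\p(\tilde{\Xi}'(E))$.

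Finally, setting $Y = \p(\tilde{\Xi}') \setminus \iota(\cat{N}'_{lc}(n,L))$, the fibrewise picture shows $Y \cap \tilde{\pi}^{-1}(E)$ is precisely the projective hyperplane in $\tilde{\pi}^{-1}(E) = \p(\tilde{\Xi}'(E))$ corresponding to those hyperplanes in $\tilde{\Xi}'(E)$ containing the "section at infinity'' of the affine dual, hence $Y$ is a hyperplane at infinity, completing the proof. Since the whole argument is a transcription of the proof of Theorem \ref{thm:1.1}, I would simply write ``See the proof of Theorem \ref{thm:1.1}'' or give the above sketch in two or three lines. The only genuine point that needs care — and hence the main (minor) obstacle — is verifying that $q_0^{-1}(E)$ really is an affine space over $\coh{0}{X}{\ad{E} \otimes \Omega^1_X(\log S)}$ with dimension independent of $E$, i.e. that the fixed-determinant condition cuts down the fibre uniformly; this follows from the residue and trace computations in \eqref{eq:52}, \eqref{eq:53.1} together with Lemma \ref{lem:1}, but it is where the arbitrary-residue case formally differs from the fixed-residue case of Proposition \ref{prop:7.1}.
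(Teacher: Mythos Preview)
Your proposal is correct and follows essentially the same approach as the paper: the paper gives no proof of Proposition \ref{prop:9} beyond the remark that it is ``similar to Proposition \ref{prop:7.1}'', whose proof in turn reads ``See the proof of Theorem \ref{thm:1.1}'', which is exactly the affine-dual/projectivisation construction you describe. Your added remark identifying the fibre model and flagging the constant-dimension check is more detail than the paper supplies, but the argument is the same.
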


\begin{proposition}
\label{prop:10}
The homomorphism $q_0^*: \Pic{\cat{U}_{L}(n,d)} \to 
\Pic{\cat{N}'_{lc}(n,L)}$ defined by $\xi \mapsto q_0^* 
\xi$ is an isomorphism of groups.
\end{proposition}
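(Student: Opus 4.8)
The plan is to reduce the statement to Theorem \ref{thm:1.2} by running the same two-step argument (injectivity via restriction to fibres, surjectivity via the projective-bundle description) that underlies the proof of Theorem \ref{thm:1.2}, now applied to the morphism $q_0 : \cat{N}'_{lc}(n,L) \to \cat{U}_L(n,d)$ in place of $p : \cat{M}'_{lc}(n,d) \to \cat{U}(n,d)$. First I would record that $q_0$ is surjective: given $E \in \cat{U}_L(n,d)$, stability of $E$ together with \cite{B}, Proposition~$1.2$, supplies a logarithmic connection $D$ on $E$ singular over $S$ with residues in $Z(\mathfrak{gl}(n,\C))$, and one may adjust $D$ by an element of $\coh{0}{X}{\Omega^1_X \otimes \ad{E}}$ so that the induced connection on $\bigwedge^n E \cong L$ agrees with $D_L$; hence $(E,D) \in \cat{N}'_{lc}(n,L)$. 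Moreover the fibre $q_0^{-1}(E)$ is an affine space modelled over $\coh{0}{X}{\Omega^1_X \otimes \ad{E}}$, which (as in the proof of Proposition \ref{prop:7}) is isomorphic to the cotangent space $\Omega^1_{\cat{U}_L(n,d),E}$; in particular $q_0^{-1}(E) \cong \C^{N'}$ for $N' = \dim \cat{U}_L(n,d) = (n^2-1)(g-1)$.

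For injectivity of $q_0^*$, I would take an algebraic line bundle $\xi \to \cat{U}_L(n,d)$ with $q_0^*\xi$ trivial, fix a nowhere-vanishing section $s \in \coh{0}{\cat{N}'_{lc}(n,L)}{q_0^*\xi}$, and restrict it to each fibre $q_0^{-1}(E) \cong \C^{N'}$. A nowhere-vanishing algebraic function on an affine space is constant, so $s|_{q_0^{-1}(E)}$ determines a nonzero vector in $\xi(E)$; since $s$ is fibrewise constant it descends to a nowhere-vanishing section of $\xi$ over $\cat{U}_L(n,d)$, forcing $\xi$ to be trivial. For surjectivity, I would invoke Proposition \ref{prop:9}, which gives an algebraic vector bundle $\tilde{\Xi}' \to \cat{U}_L(n,d)$ with $\cat{N}'_{lc}(n,L)$ realised as the complement of the hyperplane at infinity in $\p(\tilde{\Xi}')$; then restriction $\Pic{\p(\tilde{\Xi}')} \to \Pic{\cat{N}'_{lc}(n,L)}$ is surjective (indeed an isomorphism, since the removed locus is a hyperplane-bundle divisor), and combining this with the standard computation $\Pic{\p(\tilde{\Xi}')} \cong \tilde{\pi}^*\Pic{\cat{U}_L(n,d)} \oplus \Z\,\struct{\p(\tilde{\Xi}')}(1)$ shows every line bundle on $\cat{N}'_{lc}(n,L)$ comes from $\cat{U}_L(n,d)$ via $q_0^*$. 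Since the proof is essentially identical to that of Theorem \ref{thm:1.2} with the obvious substitutions, I would simply write ``Proof is very similar to the proof of Theorem \ref{thm:1.2}'' in the paper itself.

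The one point that genuinely needs the arbitrary-residue setting rather than being a verbatim transcription is the identification of the affine model of the fibre: here $q_0^{-1}(E)$ is modelled on $\coh{0}{X}{\Omega^1_X \otimes \ad{E}}$ (the traceless part), because the determinant connection is pinned to $D_L$, exactly as in the fixed-residue determinant case of Proposition \ref{prop:7}; what differs from $\cat{N}'_{lc}(n,d)$ is that we do \emph{not} use the larger space $\coh{0}{X}{\ENd{E}\otimes\Omega^1_X(\log S)}$ of \eqref{eq:52}. The main obstacle, such as it is, is bookkeeping: one must check that the affine-bundle/torsor structure on $q_0$ is algebraic (so that $\tilde{\Xi}'$ and $\p(\tilde{\Xi}')$ exist as algebraic objects, which is Proposition \ref{prop:9}) and that the ``hyperplane at infinity'' is a genuine divisor whose complement has the stated Picard group; both follow by transporting the arguments of Theorem \ref{thm:1.1} and Theorem \ref{thm:1.2}, and no new geometric input beyond \cite{B}, Proposition~$1.2$, and \cite{R} is required.
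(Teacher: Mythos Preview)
Your approach is exactly the paper's: the paper gives no separate proof for Proposition \ref{prop:10}, recording it as the analogue of Proposition \ref{prop:8}, whose proof in turn just points to Theorem \ref{thm:1.2}; your sketch (injectivity by constancy of nowhere-vanishing functions on affine fibres, surjectivity via Proposition \ref{prop:9} and the projective-bundle Picard group) is precisely that argument transported.

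One small discrepancy worth noting: you assert that $q_0^{-1}(E)$ is an affine space modelled on $\coh{0}{X}{\Omega^1_X\otimes\ad{E}}$, i.e.\ that $q_0$ is an $\Omega^1_{\cat{U}_L(n,d)}$-torsor. The paper, however, explicitly remarks just after Proposition \ref{prop:10} that $q_0$ is \emph{not} an $\Omega^1_{\cat{U}_L(n,d)}$-torsor, because in $\cat{N}'_{lc}(n,L)$ the (scalar) residues are permitted to range over the discrete set $V$ via the map $\Phi$ of \eqref{eq:56}. This does not damage the Picard-group argument---restricting your section $s$ to any single component $\Phi^{-1}(\alpha)\cap q_0^{-1}(E)$ already yields the required trivialisation of $\xi$, and surjectivity is carried entirely by Proposition \ref{prop:9}---but your description of the fibre should be amended to reflect this.
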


Note that $q_0: \cat{N}'_{lc}(n,L) \to \cat{U}_L(n,d) $
is not a $\Omega^1_{\cat{U}_L(n,d)}$-torsor, and therefore we cannot apply the same technique as in previous
section\eqref{fun} to compute the algebraic functions 
on $\cat{N}'_{lc}(n,L)$.

Next, let 
\begin{equation*}
\label{eq:55.1}
V = \{(\alpha_1, \ldots, \alpha_m) \in \C^m|~~ n \alpha_j \in \Z~~ \text{and}~~ d+ n \sum_{j=1}^{m} \alpha_j = 0 \}
\end{equation*}

Define a map 
\begin{equation}
\label{eq:56}
\Phi: \cat{N}'_{lc}(n,L) \to V
\end{equation}
by $(E,D) \mapsto (\tr{Res(D,x_1)}/n, \ldots, \tr{Res(D,x_m)}/n)$.

\begin{proof}[\bf Proof of Theorem \ref{thm:1.4}]
\label{proof_thm_1.4}
Let $(\alpha_1, \ldots, \alpha_m) \in V$. Then 
$\Phi^{-1}((\alpha_1, \ldots, \alpha_m))$ is the moduli
space of logarithmic connections with fixed residues 
$\alpha_j \id{E(x_j)}$, which is isomorphic to 
$\cat{M}'_{lc}(n,L)$ follows from Corollary \ref{cor:1.1}. Let $g: \cat{N}'_{lc}(n,d) \to \C$ be an algebraic 
function. Then $g$ restricted to each fibre of $\Phi$
is an algebraic function on the moduli space isomorphic
to $\cat{M}'_{lc}(n,L)$. Now, from Theorem \ref{thm:1.3},
$g$ is constant on each fibre and thus defining a 
function from $V \to \C$. This completes the proof.
\end{proof}

Similarly, we define a map
\begin{equation}
\label{eq:57}
\Psi: \cat{N}_{lc}(n,L) \to V
\end{equation}
by $(E,D) \mapsto (\tr{Res(D,x_1)}/n, \ldots, 
\tr{Res(D,x_m)}/n)$.
We have following
\begin{theorem}
\label{thm:7}
Any algebraic function on $\cat{N}_{lc}(n,L)$ factor 
through the surjective map $\Psi: \cat{N}_{lc}(n,L) \to V$ as defined in \eqref{eq:57}.
\end{theorem}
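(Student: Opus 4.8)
The plan is to mirror the proof of Theorem \ref{thm:1.4}, replacing $\cat{N}'_{lc}(n,L)$ by $\cat{N}_{lc}(n,L)$ and $\cat{M}'_{lc}(n,L)$ by $\cat{M}_{lc}(n,L)$, and using Corollary \ref{cor:2} in place of Theorem \ref{thm:1.3}. First I would observe that the map $\Psi$ in \eqref{eq:57} is well defined: for any pair $(E,D) \in \cat{N}_{lc}(n,L)$, the residues lie in $Z(\mathfrak{gl}(n,\C))$, so $Res(D,x_j) = \alpha_j \id{E(x_j)}$ for some $\alpha_j \in \C$ with $n\alpha_j = \tr{Res(D,x_j)} \in \Z$; moreover the constraint $d + n\sum_{j=1}^m \alpha_j = 0$ comes from \eqref{eq:51} applied to the induced connection on $\bigwedge^n E \cong L$ (or directly from \eqref{eq:15}), so indeed $(\alpha_1,\ldots,\alpha_m) \in V$. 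Surjectivity of $\Psi$ follows because for any prescribed $(\alpha_1,\ldots,\alpha_m) \in V$ one can build a line bundle $L_0 = \otimes_{i=1}^m \struct{X}(-n\alpha_i x_i)$ with the de Rham logarithmic connection having those residues, and then (using \cite{B}, Proposition 1.2, together with tensoring to adjust the determinant to $L$) produce a pair in $\cat{N}_{lc}(n,L)$ with those trace-residues; alternatively surjectivity is inherited from that of the restriction $\Phi$ on the dense open subset $\cat{N}'_{lc}(n,L)$.

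Next I would identify the fibres of $\Psi$. For a fixed $(\alpha_1,\ldots,\alpha_m) \in V$, the preimage $\Psi^{-1}((\alpha_1,\ldots,\alpha_m))$ consists of all pairs $(E,D)$ with $\bigwedge^n E \cong L$, with $Res(D,x_j) = \alpha_j \id{E(x_j)}$, and with the induced connection on $\bigwedge^n E$ equal to $D_L$; this is exactly the moduli space $\cat{V}_{lc}(n,L)$ (for the residue data $\alpha_j$) of the last part of section \eqref{fun}, hence by Corollary \ref{cor:1.2} it is birationally equivalent — and in particular has the same ring of global algebraic functions — to $\cat{M}_{lc}(n,L)$. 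By Corollary \ref{cor:2}, $\coh{0}{\cat{M}_{lc}(n,L)}{\struct{\cat{M}_{lc}(n,L)}} = \C$, so each such fibre admits no non-constant algebraic function.

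Finally, given an algebraic function $g : \cat{N}_{lc}(n,L) \to \C$, I would restrict it to each fibre $\Psi^{-1}(v)$ for $v \in V$; by the previous paragraph this restriction is constant, so $g$ factors set-theoretically through $\Psi$, giving a function $\bar{g} : V \to \C$ with $g = \bar{g} \circ \Psi$. The main obstacle I anticipate is the fibrewise step: one must be careful that the fibre $\Psi^{-1}(v)$ is genuinely (isomorphic or birational to) a moduli space of the type $\cat{M}_{lc}(n,L)$ or $\cat{V}_{lc}(n,L)$ — in particular that the stability of the underlying bundle is not being silently assumed — and that "constant on each fibre of a surjective morphism of varieties" suffices to conclude the factorization; since $V$ is here a (discrete, hence smooth) affine set of solutions and $\Psi$ is surjective, no further regularity of $\bar g$ is claimed and the argument closes exactly as in the proof of Theorem \ref{thm:1.4}.
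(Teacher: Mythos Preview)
Your approach mirrors the paper's proof exactly: identify the fibre of $\Psi$ over $(\alpha_1,\ldots,\alpha_m)\in V$ with a moduli space of the type $\cat{V}_{lc}(n,L)$, show it has no non-constant algebraic functions, and conclude. The paper's own argument is a one-line appeal to Corollary~\ref{cor:2} in the same way.

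There is, however, one genuine slip in your fibrewise step. You write that by Corollary~\ref{cor:1.2} the fibre is birationally equivalent to $\cat{M}_{lc}(n,L)$, ``and in particular has the same ring of global algebraic functions.'' Birational equivalence does \emph{not} in general preserve the ring of global regular functions: $\p^1$ and $\mathbb{A}^1$ are birational, yet their global functions are $\C$ and $\C[t]$ respectively. So from Corollary~\ref{cor:1.2} alone you cannot transfer $\coh{0}{\cat{M}_{lc}(n,L)}{\struct{\cat{M}_{lc}(n,L)}}=\C$ to the fibre.

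The fix is to bypass Corollary~\ref{cor:1.2} and argue via density, exactly as Corollary~\ref{cor:2} itself is deduced. The fibre $\Psi^{-1}(v)=\cat{V}_{lc}(n,L)$ contains $\cat{V}'_{lc}(n,L)$ as a Zariski open dense subset, and by Corollary~\ref{cor:1.1} one has an \emph{isomorphism} $\cat{V}'_{lc}(n,L)\cong\cat{M}'_{lc}(n,L)\cong\cat{C}(\Theta)$. Theorem~\ref{thm:1.3} then gives $\coh{0}{\cat{V}'_{lc}(n,L)}{\struct{\cat{V}'_{lc}(n,L)}}=\C$, and since any regular function on the irreducible variety $\cat{V}_{lc}(n,L)$ is determined by its restriction to a dense open, the fibre has only constants. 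With this correction your argument is complete and matches the paper's.
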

\begin{proof}
Let $g: \cat{N}_{lc}(n,L) \to \C$ be an algebraic function. Then restriction of $g$ to each fibre of 
$\Psi$ is a constant function, follows from Corollary
\ref{cor:2}, and hence defining a function from $V \to
\C$.
\end{proof}

\section*{Acknowledgements}  
The author would like to thank referees for their detailed 
and helpful comments.  
The author is deeply grateful to Prof. Indranil Biswas
for suggesting the problem, and helpful discussions, and 
 would like  to thank  his Ph.D. advisor Prof. 
N. Raghavendra for numerous discussions and his 
guidance.

\end{document}